\DeclareMathOperator{\Tr}{Tr}
\author{Felix Goldberg}
\address{Caesarea-Rothschild Institute, University of Haifa, Haifa, Israel}
\email{felix.goldberg@gmail.com}
\author{Steve Kirkland}
\address{Department of Mathematics, University of Manitoba, Manitoba, Canada}
\email{Stephen.Kirkland@umanitoba.ca}
\author{Anu Varghese}
\address{Department of Mathematics, CUSAT, Cochin, India}
\email{anukarintholil@gmail.com}
\author{Ambat Vijayakumar}
\address{Department of Mathematics, CUSAT, Cochin, India}
\email{vambat@gmail.com}
\title{On split graphs with four distinct eigenvalues}
\date{April 30, 2014}
\newtheorem{thm}{Theorem}[section]
\newtheorem{rmrk}[thm]{Remark}
\newtheorem{lem}[thm]{Lemma}
\newtheorem{prop}[thm]{Proposition}
\newtheorem{prob}[thm]{Problem}
\newtheorem{defin}[thm]{Definition}
\newtheorem{qstn}[thm]{Question}
\begin{document}

\begin{abstract}
It is a well-known fact that a graph of diameter $d$ has at least  $d+1$ eigenvalues. Let us call a graph \emph{$d$-extremal} if it has diameter $d$ and exactly $d+1$ eigenvalues. Such graphs have been intensively studied by various authors.

A graph is \emph{split} if its vertex set can be partitioned into a clique and a stable set. Such a graph has diameter at most $3$. We obtain a complete classification of the connected bidegreed $3$-extremal split graphs. We also show how to construct certain families of non-bidegreed $3$-extremal split graphs. 
\end{abstract}

\subjclass[2010]{05C50, 05B05, 05C75}

\keywords{split graph, adjacency matrix, distinct eigenvalues, restricted eigenvalue, combinatorial design, graph structure}

\maketitle
 
\section{Introduction} 
The eigenvalues of a graph are the eigenvalues of its adjacency matrix $A(G)$. Let us denote the number of distinct eigenvalues of the graph $G$ by $\delta(G)$. 

It is a basic precept of spectral graph theory that low values of $\delta(G)$ indicate the presence of special structure in the graph $G$. Indeed, we may point out a number of classical results in this vein:

\begin{thm}\cite{Doob70}
If $\delta(G)=2$, then $G$ is isomorphic to a disjoint union of a number of copies of a clique. That is: $G=mK_{n}$
\end{thm}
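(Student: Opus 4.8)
The plan is to exploit the fact that a graph with exactly two distinct eigenvalues has a minimal polynomial of degree $2$. Write the two eigenvalues as $\lambda_1 > \lambda_2$. Since $A = A(G)$ is real symmetric it is diagonalizable, so its minimal polynomial is the product of $(x-\lambda)$ over the \emph{distinct} eigenvalues $\lambda$; here this gives $(x-\lambda_1)(x-\lambda_2)$. Hence $A$ satisfies the quadratic identity
\[
A^2 = (\lambda_1+\lambda_2)A - \lambda_1\lambda_2 I.
\]
My strategy is to extract combinatorial information from this identity by reading off its diagonal and off-diagonal entries separately, and then to argue that the resulting local conditions force every component of $G$ to be a clique.

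First I would look at the diagonal. The $(i,i)$ entry of $A^2$ equals the degree $\deg(i)$, while the right-hand side contributes $-\lambda_1\lambda_2$ on the diagonal (as $A$ has zero diagonal). Thus every vertex has degree $r:=-\lambda_1\lambda_2$, so $G$ is $r$-regular. Next I would read the off-diagonal entries: for $i\neq j$ the entry $(A^2)_{ij}$ counts the common neighbours of $i$ and $j$, and the identity says this count equals $(\lambda_1+\lambda_2)A_{ij}$. In particular, whenever $i$ and $j$ are \emph{non-adjacent} we get $A_{ij}=0$ and hence $i$ and $j$ have no common neighbour.

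The final step turns this ``no common neighbour for non-adjacent pairs'' condition into the structural conclusion. If some connected component had diameter at least $2$, it would contain a pair of vertices at distance exactly $2$, and the midpoint of a shortest path between them would be a common neighbour of a non-adjacent pair --- a contradiction. Hence every component has diameter at most $1$, i.e.\ is a complete graph; and since $G$ is $r$-regular each such clique is a copy of $K_{r+1}$. Writing $n=r+1$ yields $G=mK_n$, as claimed. I expect the genuinely substantive step to be this last translation from the arithmetic of $A^2$ to graph structure; the eigenvalue bookkeeping and the minimal-polynomial identity are routine once diagonalizability is invoked. A minor point to dispatch is the degenerate edgeless case, which one excludes by noting that a graph with an edge has $\lambda_1>0>\lambda_2$ and hence $r\ge 1$.
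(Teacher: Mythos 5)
Your proof is correct, and it is precisely the approach the paper attributes to Doob: the paper does not prove this cited theorem itself, but its introduction sketches exactly your argument --- use the linearly factored minimal polynomial $(A-\lambda_1 I)(A-\lambda_2 I)=0$ and read off walk counts from the entries of $A^2$ to constrain the structure of $G$. Your handling of the diagonal (regularity), the off-diagonal (non-adjacent vertices share no neighbour, forcing each component to be complete), and the degenerate edgeless case is all sound.
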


\begin{thm}\cite{ShrBha65}
If $\delta(G)=3$ and $G$ is regular, then $G$ is strongly regular.
\end{thm}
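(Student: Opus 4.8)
The plan is to exploit the fact that a real symmetric matrix with exactly three distinct eigenvalues satisfies a single quadratic relation once the Perron eigenspace is factored out, and then to read off the strong-regularity conditions entry by entry. Throughout I assume $G$ is connected; this is harmless because the definition of a strongly regular graph already presumes connectedness, and in any case a $k$-regular graph has the all-ones vector $\mathbf{1}$ as an eigenvector for the eigenvalue $k$, which by the Perron--Frobenius theorem is the largest eigenvalue and—precisely when $G$ is connected—is simple. Write the three distinct eigenvalues as $k=\theta_0>\theta_1>\theta_2$. Since $A=A(G)$ is real symmetric it is diagonalizable, so its minimal polynomial is $(x-k)(x-\theta_1)(x-\theta_2)$, with each eigenvalue a simple root.

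First I would form the matrix
\[
M=(A-\theta_1 I)(A-\theta_2 I).
\]
By construction $M$ annihilates the eigenspaces of $\theta_1$ and $\theta_2$, while on the one-dimensional $k$-eigenspace $\langle\mathbf{1}\rangle$ it acts as the scalar $(k-\theta_1)(k-\theta_2)$. Because the eigenspaces of the symmetric matrix $A$ are mutually orthogonal, $M$ is therefore $(k-\theta_1)(k-\theta_2)$ times the orthogonal projection onto $\langle\mathbf{1}\rangle$, that is,
\[
M=\frac{(k-\theta_1)(k-\theta_2)}{n}\,J,
\]
where $J$ is the all-ones matrix and $n=|V(G)|$. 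Writing $\mu:=\dfrac{(k-\theta_1)(k-\theta_2)}{n}$, every entry of $M$ equals $\mu$.

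The second step is to expand $M=A^2-(\theta_1+\theta_2)A+\theta_1\theta_2 I$ and to compare entries with $\mu J$. Recall that $(A^2)_{ij}$ counts the common neighbours of $i$ and $j$, and $(A^2)_{ii}=k$. Reading off an off-diagonal entry with $i\sim j$ (so $A_{ij}=1$) shows that the number of common neighbours equals $\theta_1+\theta_2+\mu$, a constant I call $\lambda$; reading off an entry with $i\not\sim j$ (so $A_{ij}=0$) shows that the number of common neighbours equals $\mu$, again constant; and the diagonal merely records a consistency relation. Hence adjacent vertices always share $\lambda$ common neighbours and non-adjacent vertices always share $\mu$, which is exactly the definition of a strongly regular graph with parameters $(n,k,\lambda,\mu)$.

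The routine part is the entrywise bookkeeping in the last step. The one genuinely delicate point is the simplicity of the Perron eigenvalue $k$: this is where connectedness is essential, since it guarantees the $k$-eigenspace is spanned by $\mathbf{1}$ alone, and hence that $M$ collapses to a multiple of $J$ rather than to a multiple of a block-diagonal projection (as would happen if $G$ split into several components). I would therefore make the connectedness hypothesis explicit, or first reduce to a single component, after which the remainder of the argument is purely linear-algebraic.
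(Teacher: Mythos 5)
Your proof is correct, and it is essentially the argument the paper itself gestures at: the statement is only cited (to Shrikhande--Bhagwandas) without proof, but the introduction explicitly proposes deducing structure from the factored minimal polynomial $(A-\lambda_{1}I)\cdots(A-\lambda_{\delta}I)=0$ together with the walk-counting interpretation of the entries of $A^{2}$, which is exactly what you do after the standard refinement of dividing out the Perron factor so that $(A-\theta_{1}I)(A-\theta_{2}I)$ becomes a multiple of $J$. Your explicit attention to connectedness (needed for the simplicity of $k$, and genuinely necessary, e.g.\ $2C_{5}$ is regular with three distinct eigenvalues but not strongly regular under the usual convention) is a correct and worthwhile addition.
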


\begin{thm}\cite[p. 166]{Spectra}\label{thm:regbip}
If $\delta(G)=4$ and $G$ is regular bipartite, then $G$ is the incidence graph of a symmetric design.
\end{thm}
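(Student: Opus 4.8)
The plan is to pass from the adjacency matrix to the biadjacency matrix and recognize the defining equation of a symmetric design. Since $G$ is bipartite with parts $X$ and $Y$, I would write its adjacency matrix in block form
$$A = \begin{pmatrix} 0 & N \\ N^{T} & 0 \end{pmatrix},$$
where $N$ is the $0$–$1$ biadjacency matrix. Because $G$ is $k$-regular, counting edges forces $|X|=|Y|=:m$, so $N$ is square and $N\mathbf{1}=N^{T}\mathbf{1}=k\mathbf{1}$. First I would pin down the spectrum: the spectrum of a bipartite graph is symmetric about $0$, and $k$ is the Perron eigenvalue. With exactly four distinct eigenvalues and this symmetry, $0$ cannot occur (it would force the symmetric eigenvalue set to have odd cardinality), so the distinct eigenvalues are precisely $k,\theta,-\theta,-k$ with $0<\theta<k$.

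Next I would square the matrix. Since $A^{2}=\operatorname{diag}(NN^{T},N^{T}N)$, the eigenvalues of $NN^{T}$ lie in $\{k^{2},\theta^{2}\}$. Assuming $G$ connected, $k$ is a simple eigenvalue of $A$ by Perron–Frobenius, hence $k^{2}$ is a simple eigenvalue of $NN^{T}$ with eigenvector $\mathbf{1}$, while $\theta^{2}$ has multiplicity $m-1$ on $\mathbf{1}^{\perp}$. The spectral decomposition then reads
$$NN^{T}=\theta^{2}I+\frac{k^{2}-\theta^{2}}{m}\,J.$$
Now I would extract the combinatorial content. The diagonal entry $(NN^{T})_{ii}=\sum_{j}N_{ij}^{2}=\sum_{j}N_{ij}=k$, which combined with the displayed identity yields $k=\theta^{2}+\mu$ where $\mu:=\frac{k^{2}-\theta^{2}}{m}$; thus $NN^{T}=(k-\mu)I+\mu J$. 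Equivalently, every diagonal entry of $NN^{T}$ equals $k$ and every off-diagonal entry equals the single constant $\mu$.

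Finally I would interpret $N$ as a point–block incidence matrix: each block (row) contains $k$ points, each point (column) lies in $k$ blocks, and any two distinct blocks meet in exactly $\mu$ points — precisely the axioms of a symmetric $2$-$(m,k,\mu)$ design. The identical argument applied to $N^{T}N$ (whose spectrum, diagonal, and Perron eigenvector coincide with those of $NN^{T}$) gives the dual relation, so any two points lie in exactly $\mu$ common blocks. One must also exclude degeneracies: $\theta<k$ forces $\mu>0$, and $N=J$ (that is, $G=K_{m,m}$) is ruled out since it has only the three eigenvalues $\pm m,0$, so $k<m$ and the design is nontrivial. Hence $G$ is the incidence graph of a symmetric design. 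I expect the only genuinely delicate point to be the simplicity of $k^{2}$ as an eigenvalue of $NN^{T}$, which is exactly where connectedness (via Perron–Frobenius) enters; without it $G$ could be a disjoint union of smaller regular bipartite pieces and the conclusion would fail.
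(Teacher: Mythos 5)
The paper does not prove this statement --- it is quoted as background with a citation to \cite[p.\ 166]{Spectra} --- so there is no internal proof to compare against; your argument is the standard one for this classical result and it is correct. The block decomposition $A^{2}=\operatorname{diag}(NN^{T},N^{T}N)$, the exclusion of the eigenvalue $0$ via spectral symmetry, the simplicity of $k^{2}$ from Perron--Frobenius, and the resulting identity $NN^{T}=(k-\mu)I+\mu J$ are exactly the right chain of deductions, and you correctly flag the two points the bare statement leaves implicit: connectedness (needed for simplicity of $k^{2}$, and assumed throughout the paper) and nontriviality of the resulting design (ruling out $K_{m,m}$, which has only three distinct eigenvalues).
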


For samples of some recent work of this kind we refer to \cite{FioGar06,Ste11}. A way to intuitively grasp why such results hold is to consider the minimal polynomial of $A(G)$ which factors linearly as $A(G)$ is diagonalizable. So if $\lambda_{1},\lambda_{2},\ldots,\lambda_{\delta}$ are the distinct eigenvalues of $G$, we have:
$$
(A-\lambda_{1}I)\cdot\ldots\cdot(A-\lambda_{\delta}I)=0.
$$
If $\delta$ is small, then we can infer from this equation constraints on the structure of $G$, using the fact the $ij$th entry of $A^{k}$ is the number of $k$-walks between vertices $i$ and $j$ (cf. \cite[p. 4]{Spectra_BH})). This is the approach taken by Doob in \cite{Doob70} and it works very well for $\delta=2$; for larger values of $\delta$ it becomes necessary to introduce additional assumptions on $G$ in order to complete the analysis.


Another relation between structure and spectrum is given by the following well-known fact:
\begin{prop}\cite[p. 5]{Spectra_BH}
Let $G$ be a connected graph of diameter $d$. Then $\delta(G)\geq d+1$.
\end{prop}

Let us call graphs of diameter $d$ who have $d+1$ distinct eigenvalues \emph{$d$-extremal}. The complete graph $K_{n}$ is $1$-extremal and strongly regular graphs are $2$-extremal. More generally, distance regular graphs are $d$-extremal \cite[p. 178]{Spectra_BH}. 

Our main goal in this paper is to obtain a result similar to Theorem \ref{thm:regbip} for the class of \emph{split} graphs, instead of bipartite graphs. Recall that a graph is split if its vertex set can be partitioned into a clique and a stable set. 

While split graphs are less celebrated than the bipartite ones, the reader will surely agree, upon reflection, that they form no less natural a class. Indeed, both notions of bipartite and split graphs have been jointly generalized: a graph has a \emph{$(k,\ell)$-partition} - or, shortly, the graph is $(k,\ell)$ - if its vertex set can be partitioned into $k$ independent sets and $\ell$ cliques (cf. \cite{Hell_et04}). Clearly, bipartite graphs are $(2,0)$ while split graphs are $(1,1)$. A further generalization can be found in \cite{Hell14}.

There are also other reasons to accord to split graphs an important role in graph theory. One of them is that the \emph{split partitions}, that is the degree sequence vectors of split graphs, form the top part of the lattice of graphic partitions \cite{Merris03}, which fact may ultimately account for the unexpected emergence of split graphs in various contexts. 

Another reason is the central role split graphs play in the class of chordal graphs. It has been shown in \cite{BenRicWor85} that almost all chordal graphs are split and understanding a property for split graphs is often a major stepping-stone on the way to undertsanding it for all chordal graphs.

Unlike for bipartite graphs, regularity does not seem to be a natural assumption for split graphs. We replace it instead with the assumption that all vertex degrees in $G$ are either $t$ or $y$ and say that $G$ is 
$(t,y)$-\emph{bidegreed} . The structure of such graphs is just as we would predict it to be:
\begin{lem}\cite[Theorem 2.1]{SplitIntegral}\label{lem:steve}
Let $G$ be a connected $(t,y)$-bidegreed split graph. Then all vertices in the clique are of degree $t$ and all vertices in the stable set are of degree $y$.
\end{lem}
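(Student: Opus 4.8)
The plan is to deduce the result from two elementary degree bounds, after fixing the split partition judiciously. Assume $t>y$ (if $t=y$ then $G$ is regular, and a connected regular split graph is easily seen to be complete, making the statement trivial). I would first note that the conclusion genuinely depends on the choice of partition — for $K_4$ minus an edge, the triangle-plus-singleton partition has a clique of mixed degree — so I would fix a split partition $(K,S)$ in which $S$ is a \emph{maximal} independent set; such a partition exists because whenever a clique vertex has no neighbour in $S$ it may be moved into $S$. With this choice, every clique vertex is adjacent to the remaining $|K|-1$ clique vertices and, by maximality, to at least one stable vertex, so $\deg v \ge |K|$ for $v\in K$; on the other hand every $u\in S$ satisfies $\deg u \le |K|$, its neighbours all lying in $K$.

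These bounds immediately forbid the ``crossing'' configuration of a clique vertex of degree $y$ together with a stable vertex of degree $t$: the former would give $y\ge|K|$ and the latter $t\le|K|$, whence $t\le|K|\le y<t$, absurd. Consequently, for the chosen partition, either every clique vertex has degree $t$, or every stable vertex has degree $y$.

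The substance of the proof is to upgrade each alternative to the full conclusion, and here the boundary value $|K|$ is the crux. Suppose some clique vertex had degree $y$; then $y\ge|K|$, no stable vertex can have degree $t$ (since $\deg u\le|K|\le y<t$), so every stable vertex has degree $y$, forcing $y=|K|$ and hence every stable vertex adjacent to all of $K$. But then every clique vertex is adjacent to all of $S$, all degrees equal $|K|$, and $G=K_{|K|+1}$ is regular, contradicting $t\ne y$. Thus every clique vertex has degree $t$. Since $G$ is bidegreed, some vertex has degree $y$, necessarily a stable one. If in addition some stable vertex $u$ had degree $t$, then $t\le|K|$ and $t\ge|K|$ give $t=|K|$; each clique vertex (of degree $|K|$) then has exactly one stable neighbour, which must be $u$ because $u$ is adjacent to all of $K$, leaving every other stable vertex isolated and contradicting connectedness. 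Hence every stable vertex has degree $y$, completing the argument.

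I expect the main obstacle to be precisely this boundary analysis when $t$ or $y$ equals $|K|$: that is exactly where complete and regular graphs intrude, and dispatching them cleanly is what forces both the hypothesis $t\neq y$ and (in the second case) connectedness into play. The maximality of $S$ is the other essential ingredient — it is what pins clique degrees to the range $[|K|,\infty)$ and stable degrees to $[1,|K|]$, so that the two degree classes cannot overlap except in the degenerate complete-graph case.
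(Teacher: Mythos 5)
The paper offers no proof of this lemma --- it is imported wholesale from \cite[Theorem 2.1]{SplitIntegral} --- so there is no internal argument to measure yours against; judged on its own, your proof is correct. The engine is the pair of bounds $\deg v\ge |K|$ for $v\in K$ and $\deg u\le |K|$ for $u\in S$, valid once $S$ is normalized to be a maximal independent set, and your boundary analysis at the value $|K|$ is sound: bidegreedness eliminates a degree-$y$ clique vertex (which collapses $G$ to the regular graph $K_{|K|+1}$), and connectedness eliminates a degree-$t$ stable vertex (which would isolate every other stable vertex). Your preliminary remark that the statement is partition-sensitive is a genuine point rather than a quibble: for $K_4$ minus an edge the maximum-clique partition places a degree-$2$ vertex inside the size-$3$ clique, so the lemma has to be read relative to a partition in which every clique vertex has a neighbour in the stable set, which is exactly what your normalization guarantees. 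Two microscopic expository gaps, neither fatal: in the first boundary case, ``all degrees equal $|K|$'' needs the intermediate observation that the assumed degree-$y$ clique vertex has degree $|K|-1+|S|=|K|$, forcing $|S|=1$; and the deduction $y=|K|$ from ``every stable vertex has degree $y$'' tacitly uses $S\neq\emptyset$, which holds because otherwise $G$ would be complete and hence not bidegreed.
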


Observe that split graphs have diameter of at most $3$. We can now pose our research problem as:
\begin{prob}
Characterize the $3$-extremal connected bidegreed split graphs.
\end{prob}

Our main result (Theorem \ref{thm:split4_class}) is that connected split bidegreed $3$-extremal graphs are either the coronas of cliques or are derived in a natural way from non-symmetric block designs with the property $r=\lambda^{2}$.

\section{Combinatorial preliminaries}

Let $D$ be a family of $k$-subsets of $E=\{x_{1},x_{2},\ldots,x_{v}\}$. The family $D$ is called a \emph{$(v,k,\lambda)$-design over $E$} if, for every two distinct elements $e,f$ of $E$, there are exactly $\lambda$ sets in $D$ that contain both $e$ and $f$. A design is called \emph{non-trivial} if $k<v$. We will assume $\lambda>0$ throughout, to avoid pathological cases.

The elements of $E$ are called the \emph{points} of $D$ while the sets in $D$ are called \emph{blocks} and their number is traditionally denoted by $b$. It is a well-known fact (cf. \cite[p. 4]{Stinson}) that every element of $E$ appears in the same number of blocks; this number is called the \emph{replication number} of $D$, traditionally denoted by $r$, and satisfies the equation $$r=\lambda\frac{v-1}{k-1}.$$
We shall sometimes find it convenient to expand the notation and speak of a $(v,b,r,k,\lambda)$-design.

Let us now define the split graph associated with the design $D$. Informally, we first start with the usual (bipartite) incidence graph $L(D)=(P \cup B,E)$ of $D$, so that $P$ and $D$ are the sets of points and blocks of $D$, respectively, and then add all possible edges between vertices in $P$, turning it into a clique. Formally, we can write:

\begin{defin}\label{def:assoc}
Let $D$ be a $(v,b,r,k,\lambda)$-design over $E$. The \emph{associated split graph} $G_{D}$ has $v+b$ vertices corresponding to the points and blocks of $D$. Two vertices $\epsilon,\nu$ in $G$ are adjacent if one of the following conditions holds: 
\begin{itemize}
\item
$\epsilon,\nu$ both correspond to points.
\item
$\epsilon$ corresponds to a point $x_{\epsilon} \in E$ and $\nu$ to a block $\mathbf{b}_{\nu} \in D$, so that $x_{\epsilon} \in \mathbf{b}_{\nu}$.
\end{itemize}
\end{defin}

It is easy to see that $G_{D}$ is indeed a split graph whose maximal clique $C$ has $c=v$ vertices and whose stable set has $s=b$ vertices. Observe that any $G_{D}$ must have diameter equal to $1$ or $2$ or $3$.

\begin{rmrk}
Notice that we do not rule out the possibility that $D$ has repeated blocks (so that the family $D$ is a multiset rather than a set).
\end{rmrk}


Next we present what is perhaps the earliest result of design theory (cf. \cite[p. 17]{Stinson}). 
\begin{thm}[Fisher's inequality]\label{thm:fisher}
Let $D$ be a non-trivial $(v,k,\lambda)$-design with $b$ blocks. Then $b \geq v$.
\end{thm}
If $b=v$ then the design is called \emph{symmetric} and if $b>v$ it is called \emph{non-symmetric}.
Another well-known fact that can be found on \cite[p. 23]{Stinson} is:
\begin{thm}\label{thm:sym}
Let $D$ be a $(v,k,\lambda)$-design. Then $D$ is symmetric if and only if any two blocks intersect in $\lambda$ points.
\end{thm}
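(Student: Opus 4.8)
The plan is to encode the design in its $v\times b$ point--block incidence matrix $N$, whose $(i,j)$ entry is $1$ precisely when the point $x_i$ lies in the block $\mathbf{b}_j$, and to play off the two complementary Gram matrices $NN^{T}$ and $N^{T}N$ against each other. Since each point lies in $r$ blocks and each pair of distinct points lies together in $\lambda$ blocks, the fundamental identity $NN^{T}=(r-\lambda)I+\lambda J$ holds, where $I$ is the identity and $J$ the all-ones matrix of the appropriate order. On the other hand the $(i,j)$ entry of $N^{T}N$ counts the points common to blocks $\mathbf{b}_i$ and $\mathbf{b}_j$, and its diagonal entries all equal $k$; thus the condition that any two distinct blocks meet in $\lambda$ points is exactly the statement $N^{T}N=(k-\lambda)I+\lambda J$. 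The whole argument then reduces to translating the combinatorial claim ``$b=v$'' into the language of these matrices. As preparation I would record the arithmetic: from the global count $bk=vr$ one sees that $b=v$ is equivalent to $r=k$, and using $r(k-1)=\lambda(v-1)$ I would compute $\det(NN^{T})=(r-\lambda)^{v-1}\,rk$. Because the design is non-trivial we have $k<v$ and hence $r>\lambda$, so this determinant is nonzero and $N$ has full row rank $v$.

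For the forward implication, assume $D$ is symmetric, so $b=v$ and $r=k$. Then $N$ is square and, having rank $v$, invertible. Because the row sums and the column sums of $N$ both equal $r=k$, the matrix $N$ commutes with $J$; concretely $NJ=JN=rJ$, which gives $N^{-1}J=\tfrac1r J$ and therefore $N^{-1}JN=J$. Conjugating the fundamental identity then yields $N^{T}N=N^{-1}(NN^{T})N=(r-\lambda)I+\lambda J$, whose off-diagonal entries are all $\lambda$; this is precisely the desired conclusion.

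For the converse, assume every pair of distinct blocks meets in $\lambda$ points, i.e.\ $N^{T}N=(k-\lambda)I+\lambda J$ as a $b\times b$ matrix. Two distinct $k$-subsets cannot share $k$ points, so the hypothesis forces $\lambda<k$; the degenerate equality $\lambda=k$ would make all blocks coincide, which is impossible in a non-trivial design with more than one block. Consequently the eigenvalues of $N^{T}N$ are $k+\lambda(b-1)$, with multiplicity $1$, and $k-\lambda>0$, with multiplicity $b-1$, so $N^{T}N$ is nonsingular and $\operatorname{rank}(N^{T}N)=b$. But $\operatorname{rank}(N^{T}N)\le\operatorname{rank}(N)=v$, whence $b\le v$; combined with Fisher's inequality (Theorem \ref{thm:fisher}), which gives $b\ge v$, this forces $b=v$, so $D$ is symmetric.

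I expect the main obstacle to be the bookkeeping around the degenerate case in the converse, namely certifying that the hypothesis genuinely forces $\lambda<k$ so that $k-\lambda>0$ and $N^{T}N$ attains full rank $b$. A secondary point requiring care is making the commuting relation $N^{-1}JN=J$ in the forward direction rest cleanly on the numerical equality $r=k$ (a consequence of $bk=vr$ together with $b=v$) rather than on any structural symmetry assumed in advance.
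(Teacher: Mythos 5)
Your proof is correct. The paper does not actually prove this statement---it is quoted as a known fact with a citation to Stinson's book---and your argument (the two Gram matrices $NN^{T}$ and $N^{T}N$, the similarity $N^{T}N=N^{-1}(NN^{T})N$ via $N^{-1}JN=J$ for the forward direction, and a rank count on $N^{T}N$ combined with Fisher's inequality for the converse) is exactly the standard textbook proof that the citation points to, including the correct handling of the degenerate case $\lambda=k$ that would otherwise arise from repeated blocks.
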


\begin{lem}\label{lem:nonsym}
Let $D$ be a non-trivial $(v,k,\lambda)$-design with associated split graph $G_{D}$. Suppose that $G_{D}$ has diameter $3$, then $D$ is non-symmetric and $s>c$.
\end{lem}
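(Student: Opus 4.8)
The plan is to translate the metric condition ``$G_D$ has diameter $3$'' into a purely combinatorial condition on the blocks of $D$, and then to read off both conclusions from Theorems \ref{thm:sym} and \ref{thm:fisher}. The first step is to work out the distance function of $G_D$ explicitly. Since the points form a clique, any two point-vertices are at distance $1$. Since every block is a nonempty set of points and the points form a clique, a point-vertex $x$ and a block-vertex $\mathbf{b}$ are at distance $1$ when $x \in \mathbf{b}$ and at distance $2$ otherwise (routing through any point of $\mathbf{b}$). Thus the only pairs that can be responsible for a diameter exceeding $2$ are pairs of block-vertices.

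Next I would analyze block--block distances. Two block-vertices are never adjacent, so they lie at distance $2$ whenever the blocks share a common point (a common neighbor in the clique) and at distance $3$ otherwise, the walk $\mathbf{b}_1 - x - y - \mathbf{b}_2$ through points $x \in \mathbf{b}_1$, $y \in \mathbf{b}_2$ always being available. Hence $G_D$ has diameter $3$ if and only if some two blocks of $D$ are disjoint. This equivalence is the combinatorial heart of the argument; the one point needing care is verifying that two disjoint blocks really are at distance exactly $3$ (and not less), which follows because a block-vertex's only neighbors are point-vertices, so any common neighbor of two blocks would have to be a point lying in both.

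With this in hand the two assertions follow quickly. For non-symmetry I would argue by contraposition using Theorem \ref{thm:sym}: if $D$ were symmetric, then any two blocks would meet in exactly $\lambda$ points, and since we assume $\lambda > 0$ throughout, no two blocks could be disjoint; by the equivalence above this would force the diameter of $G_D$ to be at most $2$, contradicting the hypothesis. Therefore $D$ is non-symmetric. Finally, $s > c$ amounts to $b > v$: Fisher's inequality (Theorem \ref{thm:fisher}) gives $b \geq v$, and non-symmetry rules out the equality case $b = v$, leaving $b > v$, that is $s > c$, as desired.

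I expect no serious obstacle here. The only genuinely substantive step is the distance computation, and its delicate part is the clean characterization of the distance-$3$ pairs as exactly the disjoint pairs of blocks, together with the correct use of the standing assumption $\lambda > 0$ when invoking Theorem \ref{thm:sym}.
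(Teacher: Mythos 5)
Your proof is correct and follows essentially the same route as the paper: symmetry would force any two blocks to meet in $\lambda>0$ points, hence any two stable-set vertices to have a common neighbour in the clique, contradicting diameter $3$; then Fisher's inequality plus non-symmetry yields $b>v$, i.e.\ $s>c$. The extra work you do characterizing all distances in $G_D$ is sound but not needed for the one-directional argument the lemma requires.
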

\begin{proof}
Let $C$ be the clique and $S$ the independent set into which the vertex set of $G_{D}$ is partitioned. Suppose now for the sake of contradiction that $D$ is symmetric. Then by Theorem \ref{thm:sym} we see that every two blocks intersect. This means that every two vertices in $S$ have at least one common neighbour in $C$, implying that the diameter of $G$ is $2$ - a contradiction. Therefore $D$ must be non-symmetric and Fisher's inequality together with Theorem \ref{thm:sym} tells us that $s>c$.
\end{proof}

The \emph{incidence matrix} of a design $D$ is the $v \times b$ matrix $B=(b_{ij})$ with $b_{ij}=1$ if $x_{i}$ belongs to the $j$th block of $D$ and $b_{ij}=0$ otherwise.

\begin{lem}[{{\cite[Theorem 1.13]{Stinson}}}]\label{lem:design}
Let $B$ be a $v \times b$ matrix with values in $\{0,1\}$, such that each column of $B$ contains exactly $k$ $1$s. If $BB^{T}=\lambda J+(r-\lambda)I$ then $B$ is the incidence matrix of a $(v,b,r,k,\lambda)$-design $D$.
\end{lem}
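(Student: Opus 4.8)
The plan is to read off the combinatorial content of the matrix identity $BB^{T}=\lambda J+(r-\lambda)I$ one entry at a time, and thereby exhibit $B$ directly as an incidence matrix. First I would construct the candidate design: index the point set $E=\{x_{1},\ldots,x_{v}\}$ by the rows of $B$, and for each column $\ell$ define a block $\mathbf{b}_{\ell}=\{x_{i}:b_{i\ell}=1\}$. Since each column of $B$ is assumed to contain exactly $k$ ones, each $\mathbf{b}_{\ell}$ is a $k$-subset of $E$, so $D=\{\mathbf{b}_{1},\ldots,\mathbf{b}_{b}\}$ is a family of $k$-subsets (a multiset, should columns repeat), consisting of $b$ blocks as required.

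Next I would compute the entries of $BB^{T}$ combinatorially. Because the entries of $B$ lie in $\{0,1\}$, for $i\neq j$ we have
$$(BB^{T})_{ij}=\sum_{\ell=1}^{b}b_{i\ell}b_{j\ell},$$
which counts precisely the blocks containing both $x_{i}$ and $x_{j}$; whereas for $i=j$ one has $b_{i\ell}^{2}=b_{i\ell}$, so $(BB^{T})_{ii}=\sum_{\ell}b_{i\ell}$ counts the blocks containing $x_{i}$. Now I would invoke the hypothesis: the off-diagonal entries of $\lambda J+(r-\lambda)I$ all equal $\lambda$, so every pair of distinct points lies in exactly $\lambda$ blocks, which is exactly the defining property of a $(v,k,\lambda)$-design; and the diagonal entries all equal $\lambda+(r-\lambda)=r$, so every point lies in exactly $r$ blocks, identifying $r$ as the replication number. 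Hence $B$ is the incidence matrix of a $(v,b,r,k,\lambda)$-design.

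Finally, for internal consistency one may confirm that the parameters obey the relation $r(k-1)=\lambda(v-1)$ built into the notation $(v,b,r,k,\lambda)$. This I would obtain by a routine double count: applying both sides of the identity to the all-ones vector $\mathbf{1}$ and using $B\mathbf{1}=r\mathbf{1}$ (constant row sums, from the diagonal of $BB^{T}$) together with $B^{T}\mathbf{1}=k\mathbf{1}$ (the column-sum hypothesis) gives $kr=\lambda(v-1)+r$, which rearranges to the stated relation. I do not anticipate any genuine obstacle here: the whole argument is a transcription between matrix language and combinatorial language, and the only point requiring care is the correct reading of the diagonal versus off-diagonal entries of $BB^{T}$.
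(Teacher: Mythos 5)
Your argument is correct and complete: the entrywise reading of $BB^{T}$ (off-diagonal entries count blocks containing a given pair of points, diagonal entries count blocks containing a given point) together with the column-sum hypothesis is exactly what is needed, and your consistency check $r(k-1)=\lambda(v-1)$ via $BB^{T}\mathbf{1}=kr\mathbf{1}=(\lambda(v-1)+r)\mathbf{1}$ is sound. The paper itself gives no proof, citing Stinson's Theorem 1.13 instead, and your transcription between matrix and combinatorial language is the standard argument found there.
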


Let $G$ be a graph. A partition $\pi$ of its vertex set $V(G)$ into cells $\pi_{1},\pi_{2},\ldots,\pi_{m}$ is called \emph{equitable} if any vertex $v \in \pi_{i}$ has $a^{\pi}_{ij}$ neighbours in $\pi_{j}$, irrespective of the choice of $v$. The partition can be described by a matrix: $A_{\pi}=(a^{\pi}_{ij})$. 

Equitable partitions have long been used in the study of adjacency matrices (cf. \cite[Section 9.3]{AGT2} or \cite[Section 2.4]{Eigenspaces}). Our Lemma \ref{lem:part} is a weaker version of  \cite[Theorem 2.4.6]{Eigenspaces}:
\begin{lem}\label{lem:part}
Let $G$ be a graph with equitable partition $\pi$. Then every eigenvalue of $A_{\pi}$ is an eigenvalue of $A$. 
Furthermore, the Perron values of $A$ and $A_{\pi}$ are equal.
\end{lem}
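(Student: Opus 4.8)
The plan is to encode the partition $\pi$ by its \emph{characteristic matrix} $P$, the $n \times m$ matrix (where $n=|V(G)|$ and $m$ is the number of cells) whose $j$th column is the indicator vector of the cell $\pi_{j}$. The definition of equitability translates precisely into the single matrix identity $AP = PA_{\pi}$: reading off the entry indexed by a vertex $v \in \pi_{i}$ and a cell $\pi_{j}$, the left-hand side counts the neighbours of $v$ lying in $\pi_{j}$, which is $a^{\pi}_{ij}$ by equitability, while the right-hand side equals $a^{\pi}_{ij}$ as well, since $P$ merely selects the $i$th row of $A_{\pi}$. This intertwining relation is the engine of the whole proof.

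For the first assertion I would take an eigenvector $x$ of $A_{\pi}$ with eigenvalue $\mu$ and push it forward to $Px \in \mathbb{R}^{n}$. Applying the identity gives $A(Px) = (AP)x = (PA_{\pi})x = \mu(Px)$, so $Px$ is an eigenvector of $A$ for $\mu$ as soon as we know $Px \neq 0$. This is immediate: the columns of $P$ are indicator vectors of disjoint nonempty cells, hence linearly independent, so $P$ has full column rank and the map $x \mapsto Px$ is injective. Thus every eigenvalue of $A_{\pi}$ is an eigenvalue of $A$.

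It remains to identify the Perron values, and here I would argue by two inequalities. Writing $\rho(\cdot)$ for the spectral radius, note that $A$ is a nonnegative symmetric matrix, so its Perron value coincides with its largest eigenvalue; since $\rho(A_{\pi})$ is a real nonnegative eigenvalue of $A$ by the first part, we obtain $\rho(A_{\pi}) \leq \rho(A)$. For the reverse inequality I would invoke the Perron--Frobenius theorem for the nonnegative quotient matrix $A_{\pi}$ to produce an eigenvector $w \geq 0$, $w \neq 0$, with $A_{\pi}w = \rho(A_{\pi})w$. Then $Pw$ is again nonnegative and nonzero, and by the intertwining identity it is an eigenvector of $A$ for the eigenvalue $\rho(A_{\pi})$.

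The crux, and the step I expect to be the main obstacle, is to conclude from the existence of this nonnegative eigenvector that its eigenvalue must in fact be the Perron value $\rho(A)$. For a \emph{connected} graph the adjacency matrix $A$ is irreducible, and Perron--Frobenius theory guarantees that the spectral radius is the only eigenvalue of an irreducible nonnegative matrix that admits a nonnegative eigenvector; applying this to $Pw$ forces $\rho(A_{\pi}) = \rho(A)$ and finishes the argument. The disconnected case can be recovered by restricting to the component carrying the dominant eigenvalue, but connectivity is exactly what makes the Perron step clean, and it is the setting relevant to our applications.
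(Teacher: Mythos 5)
Your argument is correct, but there is nothing in the paper to compare it against line by line: the authors do not prove Lemma \ref{lem:part} at all, stating it as a weaker version of a cited result (\cite[Theorem 2.4.6]{Eigenspaces}). Your proof is the standard self-contained one via the characteristic matrix $P$ of the partition, and all the key steps are sound: the intertwining identity $AP=PA_{\pi}$ is exactly the matrix form of equitability; full column rank of $P$ makes $x\mapsto Px$ injective, giving the eigenvalue containment; the inequality $\rho(A_{\pi})\leq\rho(A)$ follows since $\rho(A_{\pi})$ is an eigenvalue of the symmetric matrix $A$; and pushing forward a nonnegative Perron vector of $A_{\pi}$ and invoking the fact that an irreducible nonnegative matrix has no nonnegative eigenvector outside its Perron eigenspace closes the loop when $G$ is connected. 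What your proof buys over the paper's citation is precisely this transparency, and connectedness is all that is needed for the application in Section \ref{sec:class}.

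The one loose end is the disconnected case, which you dispatch in a single sentence. It does go through, but the missing observation is worth recording: if $H$ is a component of $G$ realizing $\rho(A)$, then the nonempty intersections $\pi_{j}\cap H$ form an equitable partition of $H$ (neighbours of a vertex of $H$ all lie in $H$), and its quotient matrix is the principal submatrix of $A_{\pi}$ on the cells meeting $H$. The connected case gives that this submatrix has spectral radius $\rho(A)$, and a principal submatrix of a nonnegative matrix has spectral radius at most that of the full matrix, so $\rho(A)\leq\rho(A_{\pi})$, completing the equality in general. As stated the lemma carries no connectivity hypothesis, so if you want it in full generality you should include this two-line patch rather than leave it as an aside.
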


Finally, if $G$ is a graph, we denote by $G \circ K_{1}$ its \emph{corona} - the graph obtained by adding a pendant vertex to each vertex of $G$.


\section{Matrix-theoretic preliminaries}
The identity matrix will be denoted, as usual, $I$. The all-ones matrix, rectangular or square, according to context, will be denoted $J$. The all-ones vector will be denoted $j$. The set of eigenvalues of $A$ will be denoted $Spec(A)$.
The largest eigenvalue of a nonnegative matrix will be called its \emph{Perron value}.  The rank and trace of a matrix $A$ will be denoted $r(A)$ and $\Tr{A}$, respectively. The number of distinct eigenvalues of matrix $A$ will be denoted by $\delta(A)$. The Perron-Frobenius theorem (cf. \cite[Chapter 8]{HornJohnson}) will be used freely throughout.

We now record a number of simple matrix-theoretic lemmata.

\begin{lem}\label{lem:1}
If $M$ is a real symmetric matrix with $\delta(M)=1$ and eigenvalue $\lambda$, then $M=\lambda I$.
\end{lem}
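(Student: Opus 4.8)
The plan is to lean entirely on diagonalizability of real symmetric matrices, which is exactly the mechanism hinted at in the introduction when the minimal polynomial of $A(G)$ is said to ``factor linearly.'' First I would invoke the spectral theorem to write $M = U D U^{T}$, where $U$ is orthogonal and $D$ is diagonal with the eigenvalues of $M$ along its diagonal. The hypothesis $\delta(M)=1$ says that $\lambda$ is the only eigenvalue, so every diagonal entry of $D$ equals $\lambda$, i.e. $D = \lambda I$. Substituting back gives
$$
M = U(\lambda I)U^{T} = \lambda\, U U^{T} = \lambda I,
$$
using orthogonality of $U$ in the last step.

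An equivalent route, and arguably the more conceptual one for this paper, is through the minimal polynomial. Since $M$ is real symmetric it is diagonalizable, and hence its minimal polynomial is a product of \emph{distinct} linear factors, one for each distinct eigenvalue. With only the single eigenvalue $\lambda$ present, that minimal polynomial is simply $x - \lambda$, whence $M - \lambda I = 0$ and the claim follows immediately. This is the $\delta = 1$ base case of the general principle the authors describe.

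There is no real obstacle here: the entire content of the lemma is that a diagonalizable matrix with a single eigenvalue is a scalar multiple of the identity, and the only external input is the spectral theorem for symmetric matrices. I would present the spectral-theorem version for self-containedness, since it avoids even having to cite that the minimal polynomial of a symmetric matrix is squarefree.
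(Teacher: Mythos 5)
Your proposal is correct and its first argument is essentially identical to the paper's own proof, which diagonalizes $M$ as $VDV^{-1}$, observes $D=\lambda I$, and concludes $M=\lambda I$. The minimal-polynomial variant you add is a fine equivalent reformulation but not needed.
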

\begin{proof}
Diagonalize $M$ as $F=VDV^{-1}$. Since $D=\lambda I$, $M=\lambda I$ follows immediately.
\end{proof}

If the row sums of a matrix all equal to the same number $\omega$ we will say that the matrix is $\omega$-stochastic. The next lemma is a standard fact.
\begin{lem}\label{lem:cj}
Let $M$ be a real symmetric $\omega$-stochastic matrix.
If $x$ is a $\gamma$-eigenvector of $M$ for some $\gamma \neq \omega$, then $j^{T}x=0$.
\end{lem}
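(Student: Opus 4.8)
The plan is to exploit the symmetry of $M$ together with the observation that $j$ is itself an eigenvector of $M$. Since $M$ is $\omega$-stochastic, every row sum equals $\omega$, which is precisely the statement that $Mj = \omega j$; thus $j$ is an $\omega$-eigenvector of $M$. The strategy is then to pin down $j^{T}x$ by computing the scalar $j^{T}Mx$ in two different ways.

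First I would record $Mj = \omega j$. Expanding $j^{T}Mx$ by letting $M$ act to the right and using $Mx = \gamma x$ gives $j^{T}Mx = \gamma\, j^{T}x$. Letting $M$ act to the left instead, and using real symmetry $M^{T}=M$ together with $Mj = \omega j$, gives $j^{T}Mx = (M^{T}j)^{T}x = (Mj)^{T}x = \omega\, j^{T}x$. Equating the two expressions yields $(\gamma - \omega)\,j^{T}x = 0$, and since $\gamma \neq \omega$ by hypothesis, we conclude $j^{T}x = 0$.

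The only point worth flagging is the appeal to symmetry: the identity $j^{T}M = \omega j^{T}$ relies on $M^{T}=M$, and without it the conclusion can fail, which is exactly why the lemma hypothesizes that $M$ is symmetric. Conceptually the statement is nothing more than the standard fact that eigenvectors of a real symmetric matrix belonging to distinct eigenvalues are orthogonal, applied here to the pair $j$ and $x$ with eigenvalues $\omega$ and $\gamma$. Accordingly I expect no genuine obstacle in carrying out the argument.
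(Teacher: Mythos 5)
Your proof is correct and is exactly the standard orthogonality argument (eigenvectors of a real symmetric matrix for distinct eigenvalues $\omega$ and $\gamma$ are orthogonal, applied to $j$ and $x$); the paper itself omits a proof, simply calling the lemma ``a standard fact,'' so your argument is precisely the one intended.
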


\begin{lem}\label{lem:rank}
Let $M$ be a real symmetric $n \times n$ $\omega$-stochastic matrix with $\omega>0$ and let $\beta \in \mathbb{R}$ such that $\beta n\neq \omega$. Then $r(M)=r(M-\beta J)$.
\end{lem}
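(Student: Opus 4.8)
The plan is to exploit the special role of the all-ones vector $j$ together with the rank-one structure $J = jj^{T}$, reducing the claim to a precise statement about how the perturbation $-\beta J$ alters the spectrum of $M$. Since $M$ is $\omega$-stochastic we have $Mj = \omega j$, so $j$ is an eigenvector of $M$ with eigenvalue $\omega$. The first thing I would record is that $M$ and $J$ commute: indeed $MJ = M jj^{T} = \omega jj^{T} = \omega J$, and using the symmetry of $M$ one checks that $JM$ equals the same thing. Commuting real symmetric matrices admit a single common orthonormal eigenbasis, and this fact is the crux of the argument.

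Next I would fix such an orthonormal eigenbasis $\{v_{1}, \ldots, v_{n}\}$ of $M$ with $v_{1} = j/\sqrt{n}$. Because $J$ has rank one with $Jv_{1} = n v_{1}$ and $Jv_{i} = 0$ for $i \geq 2$ (as each such $v_{i}$ is orthogonal to $j$), the matrix $M - \beta J$ is diagonal in this same basis. Writing $\lambda_{i}$ for the eigenvalue of $M$ attached to $v_{i}$, so that $\lambda_{1} = \omega$, the eigenvalues of $M - \beta J$ are $\omega - \beta n$ on $v_{1}$ and the unchanged values $\lambda_{i}$ for $i \geq 2$. In other words, the spectrum of $M - \beta J$ is obtained from that of $M$ by replacing a single copy of $\omega$ with $\omega - \beta n$, while leaving every other eigenvalue intact.

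Finally I would invoke the standard fact that the rank of a real symmetric matrix equals the number of its nonzero eigenvalues counted with multiplicity; both $M$ and $M - \beta J$ are symmetric, so this applies to each. The hypothesis $\omega > 0$ guarantees that the eigenvalue being removed, $\omega$, is nonzero, while the hypothesis $\beta n \neq \omega$ guarantees that the eigenvalue being introduced, $\omega - \beta n$, is likewise nonzero. Hence the count of nonzero eigenvalues is preserved, and $r(M) = r(M - \beta J)$ follows immediately.

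The only point genuinely requiring care is justifying that the perturbation changes the spectrum solely along the direction $j$, which is precisely why I would emphasize the commutativity $MJ = JM = \omega J$: without it one might fear that the rank-one perturbation could interact with a possibly high-multiplicity $\omega$-eigenspace of $M$ in a less transparent way. The two hypotheses $\omega > 0$ and $\beta n \neq \omega$ then enter exactly to keep both the removed eigenvalue and the newly introduced eigenvalue away from zero, which is what makes the rank invariant.
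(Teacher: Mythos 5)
Your proof is correct, but it takes a genuinely different route from the paper. The paper proves the stronger-looking statement $\ker M=\ker(M-\beta J)$ directly, by applying Lemma \ref{lem:cj} twice: once to $M$ (a $0$-eigenvector of $M$ is orthogonal to $j$ since $0\neq\omega$) and once to $M-\beta J$ (which is $(\omega-\beta n)$-stochastic, and $0\neq\omega-\beta n$ by hypothesis), so that in either direction $Jx=0$ and the two kernels coincide. You instead diagonalize: since $j$ is an eigenvector of $M$, you pick an orthonormal eigenbasis of $M$ containing $j/\sqrt{n}$, observe that $J=jj^{T}$ annihilates every other basis vector, and conclude that the spectrum of $M-\beta J$ is that of $M$ with one copy of $\omega$ replaced by $\omega-\beta n$; counting nonzero eigenvalues of a symmetric matrix then gives the rank equality. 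Your version yields the full spectral relationship between the two matrices (more than the lemma asks for, in a different direction than the paper's kernel identity), and it makes completely transparent where each hypothesis enters: $\omega>0$ keeps the removed eigenvalue away from zero and $\beta n\neq\omega$ keeps the inserted one away from zero. The paper's version is shorter and reuses its Lemma \ref{lem:cj} machinery, which is the style of argument deployed throughout that section. One small polish point: the commutativity $MJ=JM$ is not really needed as a separate step --- the existence of the common eigenbasis with $v_{1}=j/\sqrt{n}$ follows at once from the fact that $j$ is an eigenvector of the symmetric matrix $M$, so the remaining basis vectors can be taken orthogonal to $j$ and hence in $\ker J$.
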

\begin{proof}
We can show that more is true, in fact: $\ker{M}=\ker{(M-\beta J)}$. Indeed, if $x \in \ker{M}$ then $j^{T}x=0$ by Lemma \ref{lem:cj} and therefore $Jx=0$ and $x \in \ker{(M-\beta J)}$. On the other hand, if $x \in \ker{(M-\beta J)}$ then $j^{T}x=0$ by Lemma \ref{lem:cj} and thus $Mx=\beta Jx=0$. 
\end{proof}

\begin{lem}\label{lem:cao}
Let $M$ be an irreducible nonnegative symmetric and $\omega$-stochastic $n \times n$ matrix with $\delta(M)=2$.
Let $\gamma \neq \omega$ be the other eigenvalue of $M$. Then $M=\frac{\omega-\gamma}{n}J+\gamma I$.
\end{lem}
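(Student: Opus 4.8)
The plan is to exploit the fact that $M$ is real symmetric, hence orthogonally diagonalizable, together with Perron--Frobenius to pin down eigenvalue multiplicities. First I would observe that $\omega$-stochasticity means $Mj=\omega j$, so $j$ is an eigenvector of $M$ for the eigenvalue $\omega$. Since $M$ is irreducible and nonnegative, Perron--Frobenius guarantees that its Perron value is a \emph{simple} eigenvalue possessing a positive eigenvector; as $j$ is a positive vector, $\omega$ must be this Perron value, and in particular $\omega$ has multiplicity one.

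Next I would use the hypothesis $\delta(M)=2$. The only eigenvalues are $\omega$ and $\gamma$, and since the multiplicities sum to $n$ while $\omega$ is simple, the eigenvalue $\gamma$ has multiplicity $n-1$. Because $M$ is symmetric, eigenvectors for distinct eigenvalues are orthogonal, so the $\gamma$-eigenspace lies in the orthogonal complement of $\mathrm{span}(j)$; this is also immediate from Lemma \ref{lem:cj}, which tells us that any $\gamma$-eigenvector is orthogonal to $j$. A dimension count then forces the $\gamma$-eigenspace to be exactly $j^{\perp}$.

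With the two eigenspaces identified, I would write down the spectral decomposition of $M$ in terms of the orthogonal projections onto them. The projection onto $\mathrm{span}(j)$ is $\tfrac{1}{n}J$ and the projection onto $j^{\perp}$ is $I-\tfrac{1}{n}J$, whence
$$
M=\omega\cdot\frac{1}{n}J+\gamma\left(I-\frac{1}{n}J\right)=\frac{\omega-\gamma}{n}J+\gamma I.
$$
An equivalent route, which avoids naming projections explicitly, is to note that $M-\gamma I$ is symmetric with eigenvalues $\omega-\gamma$ (simple, eigenvector $j$) and $0$ (multiplicity $n-1$), hence has rank one; a symmetric rank-one matrix annihilating $j^{\perp}$ and scaling $j$ by $\omega-\gamma$ must equal $\tfrac{\omega-\gamma}{j^{T}j}\,jj^{T}=\tfrac{\omega-\gamma}{n}J$, giving the same formula after adding back $\gamma I$.

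I expect no serious obstacle here, as the computation is routine once the multiplicities are known. The one genuinely load-bearing step is the simplicity of $\omega$: this is exactly where irreducibility enters through Perron--Frobenius, and it is what guarantees that the $\gamma$-eigenspace is all of $j^{\perp}$ rather than a proper subspace. Without irreducibility the conclusion would fail, so I would be careful to invoke it precisely at that point.
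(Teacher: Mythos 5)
Your proof is correct, and it takes a somewhat different route from the paper's. The paper begins by citing a result of Cao to assert that $M=uu^{T}+\gamma I$ for some positive vector $u$, and then uses $Mj=\omega j$ to solve for $u$ and show $uu^{T}=\frac{\omega-\gamma}{n}J$. You instead derive the structure from first principles: Perron--Frobenius gives simplicity of $\omega$ (with $j$ as the Perron vector), the hypothesis $\delta(M)=2$ then forces $\gamma$ to have multiplicity $n-1$ with eigenspace exactly $j^{\perp}$, and the spectral decomposition $M=\omega\cdot\frac{1}{n}J+\gamma\bigl(I-\frac{1}{n}J\bigr)$ yields the formula immediately. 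Your argument is more self-contained, since it replaces the external citation with the spectral theorem plus the same Perron--Frobenius input the paper already uses freely; the paper's version is shorter on the page but leans on Cao's lemma for exactly the rank-one structure you establish directly. You also correctly identify irreducibility as the load-bearing hypothesis --- without it $\omega$ need not be simple and the $\gamma$-eigenspace could be a proper subspace of $j^{\perp}$, exactly as you say.
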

\begin{proof}
According to \cite[p. 219-220]{Cao}, we have that $M=uu^{T}+\gamma I$, with $u$ being some positive vector. Since $Mj=\omega j$ we get $uu^{T}j+\gamma j=\omega j$. Set $f=u^{T}j$ and we can write $u=\frac{\omega-\gamma}{f}j$. Multiplying this equality by $j$ again we get $f=u^{T}j=n\frac{\omega-\gamma}{f}$ and thus $f^{2}=n(\omega-\gamma)$. Finally, $uu^{T}=\frac{(\omega-\gamma)^{2}}{f^{2}}J$ and we are done.
\end{proof}

\begin{defin}\cite[cf p. 117]{Spectra_BH}
Let $M$ be a real symmetric matrix. An eigenvalue of $M$ is called \emph{restricted} if it has an eigenvector that is orthogonal to $j$. The set of all restricted eigenvalues of $M$ will be denoted $R(M)$.
\end{defin}


We shall be interested in the $\omega$-stochastic case. The next lemma is a standard fact.
\begin{lem}\label{lem:rho}
Let $M$ be a real symmetric nonnegative $\omega$-stochastic matrix. Suppose $M$ is permuted to a block-diagonal form. Then the $\omega$-eigenspace consists of vectors constant on the indices corresponding to each diagonal block of $M$.
\end{lem}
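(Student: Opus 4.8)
The plan is to reduce to the case of a single irreducible block and then invoke the Perron--Frobenius theorem. First I would take the block-diagonal form of $M$ after permutation to be as fine as possible, say $M = M_{1} \oplus M_{2} \oplus \cdots \oplus M_{p}$, so that each diagonal block $M_{i}$ is irreducible. Because all off-diagonal blocks vanish, the $\omega$-stochasticity descends to each block: summing a row of $M$ that meets block $i$ picks up only entries lying inside $M_{i}$, so each $M_{i}$ is again nonnegative, symmetric and $\omega$-stochastic.

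Next I would analyze a single irreducible block $M_{i}$. Since $M_{i}$ is nonnegative with every row sum equal to $\omega$, its spectral radius is exactly $\omega$, because the spectral radius of a nonnegative matrix lies between its smallest and largest row sums, which here coincide. Writing $j_{i}$ for the all-ones vector supported on the indices of block $i$, $\omega$-stochasticity gives $M_{i} j_{i} = \omega j_{i}$, so $\omega$ is an eigenvalue of $M_{i}$ with positive eigenvector $j_{i}$. Applying the Perron--Frobenius theorem to the irreducible nonnegative matrix $M_{i}$, its spectral radius is a simple eigenvalue whose eigenspace is one-dimensional and spanned by a positive vector; since $j_{i}$ is itself positive, that eigenspace is precisely $\mathrm{span}(j_{i})$.

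Finally I would reassemble the blocks. The eigenvalues of the block-diagonal matrix $M$ are the union of those of its blocks, and the $\omega$-eigenspace of $M$ is the direct sum $\bigoplus_{i=1}^{p} \ker(M_{i} - \omega I)$ of the blockwise $\omega$-eigenspaces, each of which we have just identified as $\mathrm{span}(j_{i})$. Hence the $\omega$-eigenspace of $M$ equals $\mathrm{span}\{j_{1}, \ldots, j_{p}\}$, which is exactly the set of vectors that are constant on the indices of each diagonal block.

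I do not expect a serious obstacle here, since the statement is a standard consequence of Perron--Frobenius; the only points that need care are justifying that $\omega$ really is the spectral radius of each block, so that Perron--Frobenius applies to $\omega$ rather than to some strictly larger eigenvalue, and using the \emph{simplicity} half of Perron--Frobenius, which is what pins each blockwise eigenspace down to the all-ones direction rather than merely guaranteeing that $\omega$ occurs.
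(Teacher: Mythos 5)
Your proof is correct. The paper itself offers no argument for this lemma --- it is introduced with the remark that it is ``a standard fact'' --- so there is no authorial proof to compare against; your write-up is a complete and valid justification, and it uses exactly the ingredients the authors deploy explicitly in the neighbouring Lemma \ref{lem:main} (irreducibility, simplicity of the Perron value, the all-ones vector as Perron eigenvector). Each step checks out: the off-diagonal blocks vanish, so $\omega$-stochasticity passes to each diagonal block; equal row sums pin the spectral radius of each block at $\omega$; and the simplicity half of Perron--Frobenius reduces each blockwise $\omega$-eigenspace to $\mathrm{span}(j_i)$, after which the direct-sum structure gives the claim. One remark worth making: you quietly (and correctly) strengthened the hypothesis by insisting that the block-diagonal form be the \emph{finest} one, with each $M_i$ irreducible. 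That reading is in fact necessary --- for a coarser decomposition the conclusion as literally stated can fail (e.g.\ $M=I_2$ viewed as a single $2\times 2$ block has $1$-eigenspace all of $\mathbb{R}^2$, not just the constants) --- and it is the reading the authors need for their application in Lemma \ref{lem:main}, where only the containment $\mathrm{span}\{j_1,\dots,j_p\}\subseteq\ker(M-\omega I)$ with $p\ge 2$ is actually used. So your proof not only fills the gap but also makes precise the sense in which the lemma is true.
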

\begin{lem}\label{lem:main}
Let $M$ be a real symmetric nonnegative $\omega$-stochastic matrix. 
Then $Spec(A) - \{\omega\} \subseteq R(M)$. Furthermore, $\omega \in R(M)$ if and only if $M$ is reducible.
\end{lem}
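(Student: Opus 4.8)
The plan is to establish the two assertions of Lemma~\ref{lem:main} separately, in both cases exploiting the interplay between the Perron value $\omega$ and the distinguished all-ones vector $j$. For the containment $Spec(M)-\{\omega\}\subseteq R(M)$, I would take an arbitrary eigenvalue $\gamma\in Spec(M)$ with $\gamma\neq\omega$ and produce an eigenvector for $\gamma$ that is orthogonal to $j$. The natural tool here is Lemma~\ref{lem:cj}: any $\gamma$-eigenvector $x$ of the $\omega$-stochastic matrix $M$ with $\gamma\neq\omega$ already satisfies $j^{T}x=0$. Thus every eigenvector witnessing the eigenvalue $\gamma$ is automatically orthogonal to $j$, so $\gamma$ is restricted by definition. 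This direction is essentially immediate once Lemma~\ref{lem:cj} is invoked.

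The second assertion, $\omega\in R(M)$ if and only if $M$ is reducible, is the substantive part and requires both implications. For the \emph{reducible} direction, I would permute $M$ into block-diagonal form, which is possible precisely because $M$ is reducible and symmetric. By Lemma~\ref{lem:rho}, the $\omega$-eigenspace then contains all vectors that are constant on the index set of each diagonal block. Since there are at least two blocks, I can choose the vector that is, say, the constant $a$ on the first block and the constant $c$ elsewhere, and solve the single linear condition $j^{T}x=0$ by picking $a,c$ appropriately (both nonzero and of opposite sign, weighted by the block sizes). This yields an $\omega$-eigenvector orthogonal to $j$, so $\omega\in R(M)$.

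For the converse --- that $\omega\in R(M)$ forces $M$ to be reducible --- I would argue by contraposition and appeal to the Perron--Frobenius theorem. If $M$ is irreducible, then $\omega$ is a simple eigenvalue whose eigenspace is spanned by a strictly positive vector; since $M$ is $\omega$-stochastic, that eigenvector is a scalar multiple of $j$. Any $\omega$-eigenvector is therefore a multiple of $j$, and the only such vector orthogonal to $j$ is the zero vector, which is not a legitimate eigenvector. Hence $\omega$ has no eigenvector orthogonal to $j$, i.e.\ $\omega\notin R(M)$, completing the contrapositive.

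The main obstacle, such as it is, lies in the reducible direction: one must be careful that the vector constant on blocks can genuinely be chosen nonzero while meeting the orthogonality constraint $j^{T}x=0$, which relies on having at least two blocks so that the single linear condition does not force $x=0$. The Perron--Frobenius half is clean once one remembers that irreducibility gives both simplicity and positivity of the Perron vector, which pins the $\omega$-eigenspace down to the line spanned by $j$.
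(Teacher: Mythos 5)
Your proof is correct and follows essentially the same route as the paper: the first claim via Lemma~\ref{lem:cj}, the irreducible case via simplicity of the Perron value forcing the $\omega$-eigenspace to be spanned by $j$, and the reducible case via Lemma~\ref{lem:rho}. Your added care in explicitly choosing the block-constant vector orthogonal to $j$ is a harmless elaboration of the paper's one-line appeal to Lemma~\ref{lem:rho}.
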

\begin{proof}
The first claim follows immediately from Lemma \ref{lem:cj}. 
If $M$ is irreducible, then $\omega$ is simple by the Perron-Frobenius theorem. Thus every $\omega$-eigenvector is a multiple of $j$ and so is $\omega$ is not restricted. 
Conversely, if $M$ is reducible, then we can find by Lemma \ref{lem:rho} a $\omega$-eigenvector that is orthogonal to $j$, and thus $\omega$ is restricted.
\end{proof}

Recall that the \emph{Schur complement} of the partitioned matrix  
$$
A=\left[
        \begin{array}{cc}
           X& Y\\
           Z& W
        \end{array}
    \right]
$$
is $A/_{X}=W-ZX^{-1}Y$, assuming that $X$ is invertible.
\begin{lem}\cite[Theorem 2.5]{Oue81}\label{lem:schur}
Suppose that $A$ has an invertible principal submatrix $X$. Then $r(A)=r(X)+r(A/_{X})$.
\end{lem}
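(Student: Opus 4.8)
The plan is to reduce $A$ to a block-diagonal matrix by invertible left- and right-multiplications, and then read off the rank. The only facts I would invoke are that multiplying a matrix on either side by an invertible matrix leaves its rank unchanged, and that the rank of a block-diagonal matrix is the sum of the ranks of its diagonal blocks.

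First I would introduce the block-triangular matrices
$$
P=\left[\begin{array}{cc} I & 0 \\ -ZX^{-1} & I \end{array}\right], \qquad
Q=\left[\begin{array}{cc} I & -X^{-1}Y \\ 0 & I \end{array}\right],
$$
which make sense precisely because $X$ is invertible. Each is unipotent and block-triangular, hence invertible (indeed $\det P=\det Q=1$), so conjugating $A$ by them preserves rank: $r(A)=r(PAQ)$.

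Next I would perform the block elimination. Left-multiplication by $P$ clears the lower-left block, since the new $(2,1)$-block is $-ZX^{-1}X+Z=0$, and right-multiplication by $Q$ then clears the upper-right block, since the new $(1,2)$-block is $-XX^{-1}Y+Y=0$. The surviving $(2,2)$-block is exactly the Schur complement, so
$$
PAQ=\left[\begin{array}{cc} X & 0 \\ 0 & W-ZX^{-1}Y \end{array}\right]=\left[\begin{array}{cc} X & 0 \\ 0 & A/_{X} \end{array}\right].
$$
Since the right-hand side is block-diagonal, its rank splits as $r(X)+r(A/_{X})$, and combining this with $r(A)=r(PAQ)$ yields the claim.

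This argument has no genuine obstacle; the single point requiring care is verifying the two cancellations in the product $PAQ$, which is the one real computation. Everything else is an application of the two standard rank facts quoted at the outset (invariance of rank under invertible multiplication, and additivity of rank over diagonal blocks).
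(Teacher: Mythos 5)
Your argument is correct and complete: it is the standard Aitken block-diagonalization proof of Guttman's rank additivity formula, which is exactly the argument behind the result the paper simply cites from Ouellette without reproving. The only nitpick is terminological: $PAQ$ with $P\neq Q^{-1}$ is not a conjugation but an equivalence transformation; this does not affect the validity of the rank argument, since rank is invariant under multiplication by any invertible matrices on either side.
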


\section{bidegreed split graphs with four eigenvalues}\label{sec:class}
Throughout this section we shall assume that $G$ is a connected split \emph{bidegreed} graph, that is that there are exactly two distinct vertex degrees in $G=(C,S)$. By Lemma \ref{lem:steve} we know that all vertices in $C$ share the same degree $d$ and all vertices in $S$ share the same degree $k$. A vertex in $C$ has $c-1$ neighbours inside $C$ and therefore $k^{'}=d-(c-1)$ neighbours in $C$. 
Double-counting the edges between $C$ and $S$ gives us:
$$
k^{'}c=sk \Rightarrow k^{'}=\frac{sk}{c}.
$$

Let us write down the adjacency matrix $A$ of the graph $G$, with the vertices of $C$ listed first and then those of $S$:
$$
A=\left[
        \begin{array}{cc}
           J-I& B\\
           B^{T}& 0
        \end{array}
    \right].
$$
The bidegreeness assumption means that the matrix $B$ satisfies $BJ=k^{'}J$ and $B^{T}J=kJ$. Therefore $BB^{T}J=kk^{'}J$ or, in other words, $BB^{T}$ is $kk^{'}$-stochastic.

Let us now consider an eigenvector corresponding to a nonzero eigenvalue $\mu$ of $A$:
\begin{equation*}\label{eq:x}
\left[
        \begin{array}{cc}
           J-I& B\\
           B^{T}& 0
        \end{array}
\right] 
\left[
        \begin{array}{c}
           x\\
           y
        \end{array}
\right]= \mu
\left[
        \begin{array}{c}
           x\\
           y
        \end{array}
\right].
\end{equation*}
Performing some obvious manipulations we deduce that $y=\mu^{-1}B^{T}x$ and that therefore 
\begin{equation}\label{eq:basic}
\mu Jx+BB^{T}x=(\mu+\mu^{2})x.
\end{equation}
This will be our basic equation. Now multiply both sides of \eqref{eq:basic} by $J$ on the left and obtain:
$$
\mu cJx+kk^{'}Jx=(\mu+\mu^{2})Jx. 
$$
Therefore we see that either $Jx=0$ or $\mu^{2}-(c-1)\mu-kk^{'}=0$ (or both). We are now in a position to describe the spectrum of $G$:
\begin{prop}\label{prop:desc}
Let $G=(C,S)$ be a connected bidegreed split graph. If $\mu$ is an eigenvalue of $A(G)$, then at least one of the following holds:
\begin{itemize}
\item
$\mu=0$.
\item
$\mu$ is a root of the quadratic equation $t^{2}-(c-1)t-kk^{'}=0$.
\item
For some nonzero $x$ with $j^{T}x=0$, we have $BB^{T}x=(\mu+\mu^{2})x$.
\end{itemize}
\end{prop}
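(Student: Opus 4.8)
The plan is to take an arbitrary eigenvalue $\mu$ of $A(G)$ and run a short case analysis, since nearly all of the needed computation is already assembled in the paragraph leading up to the statement. The only genuinely new point is to confirm that the vector furnished by the third alternative is nonzero, so the argument is mostly a matter of organizing the three cases correctly.

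First I would dispose of $\mu=0$, which is precisely the first bullet and requires no further work. So assume from now on that $\mu \neq 0$, and fix an eigenvector whose block on the clique $C$ is $x$ and whose block on the stable set $S$ is $y$; as an eigenvector, $(x,y)$ is not the zero vector. Because $\mu \neq 0$, the bottom block of the eigenvalue equation gives $y = \mu^{-1} B^{T} x$, and substituting this into the top block produces the basic equation \eqref{eq:basic}, exactly as in the text.

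Next I would extract the dichotomy already recorded: multiplying \eqref{eq:basic} on the left by $J$ and using that $BB^{T}$ is $kk'$-stochastic (so $JBB^{T} = kk'J$ as well, by symmetry) together with $J^{2} = cJ$ yields $(\mu^{2} - (c-1)\mu - kk')\,Jx = 0$. Hence either $\mu$ is a root of $t^{2} - (c-1)t - kk' = 0$, giving the second bullet, or $Jx = 0$. In the latter case $j^{T} x = 0$ (since $Jx = (j^{T} x)\,j$), and \eqref{eq:basic} collapses to $BB^{T} x = (\mu+\mu^{2})x$.

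The one step demanding care is checking that $x \neq 0$ in this final case, which is the whole reason for carrying the full eigenvector $(x,y)$ rather than $x$ alone: if $x$ were zero then $y = \mu^{-1}B^{T} x$ would be zero too, contradicting that $(x,y)$ is an eigenvector. Thus $x$ is a nonzero vector orthogonal to $j$ satisfying the required identity, establishing the third bullet. There is no serious obstacle here; I would only flag, as the text already does, that the alternatives are inclusive rather than exclusive, so no effort should be wasted trying to separate the quadratic-root case from the orthogonality case.
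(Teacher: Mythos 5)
Your argument is correct and follows exactly the derivation the paper gives in the paragraph preceding the proposition: substitute $y=\mu^{-1}B^{T}x$ to obtain \eqref{eq:basic}, then left-multiply by $J$ to split into the quadratic-root case and the $Jx=0$ case. Your explicit check that $x\neq 0$ (via $y=\mu^{-1}B^{T}x$) is a small point the paper leaves implicit, but it is not a different route.
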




The bidegreeness assumption tells us that $\pi=\{C,S\}$ is in fact an equitable partition of the vertices. The quotient matrix is:
$$
A_{\pi}=\left[
        \begin{array}{cc}
           c-1& k^{'}\\
           k& 0
        \end{array}
\right] 
$$

Therefore, from Lemma \ref{lem:part} we see that the roots of the equation $t^{2}-(c-1)t-kk^{'}=0$ are indeed always eigenvalues of $G$. Furthermore, the larger of these two roots is the Perron value. Let us call it $\rho$ and the other root $\psi$.



Let $\gamma$ be an eigenvalue of $BB^{T}$. Clearly $\gamma \geq 0$ because $BB^{T}$ is positive semidefinite. Let $P_{\gamma}$ denote the set of roots of the quadratic equation $t^{2}+t-\gamma=0$. Since the discriminant of the equation is $1+4\gamma>0$, we see that $|P_{\gamma}|=2$. Now let us reformulate Proposition \ref{prop:desc} in a more precise way:
\begin{prop}\label{prop:desc2}
Let $G=(C,S)$ be a connected bidegreed split graph. Then the spectrum of $A$ is:
$$
Spec(A)=\begin{cases}
\cup_{\gamma \in R(BB^{T})}{P_{\gamma}} \bigcup \{\rho,\psi\} & \text{, if } r(A)=c+s \\
\cup_{\gamma \in R(BB^{T})}{P_{\gamma}} \bigcup \{\rho,\psi,0\}   & \text{, if } r(A)<c+s. \\
\end{cases}
$$
\end{prop}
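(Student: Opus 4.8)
The plan is to show that the eigenvalues described in Proposition \ref{prop:desc} account for exactly the claimed multiset, by organizing the nonzero eigenvalues according to which of the three bullet points produces them and matching this against a rank count. First I would record that $\rho$ and $\psi$, the two roots of $t^2-(c-1)t-kk' = 0$, are genuine eigenvalues of $A$ via the equitable partition $\pi = \{C,S\}$ and Lemma \ref{lem:part}, with $\rho$ the Perron value. These are the eigenvalues arising from eigenvectors $x$ that need \emph{not} satisfy $j^T x = 0$. The remaining nonzero eigenvalues must, by Proposition \ref{prop:desc}, come from the third bullet: there is a nonzero $x$ with $j^T x = 0$ and $BB^T x = (\mu + \mu^2) x$. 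Since such an $x$ is orthogonal to $j$, the eigenvalue $\gamma = \mu + \mu^2$ of $BB^T$ is precisely a \emph{restricted} eigenvalue in the sense of the definition, i.e. $\gamma \in R(BB^T)$. For each such $\gamma$, solving $t^2 + t - \gamma = 0$ gives the two-element set $P_\gamma$, and as noted $|P_\gamma| = 2$ since the discriminant $1 + 4\gamma$ is positive (here $\gamma \geq 0$ as $BB^T$ is positive semidefinite). This yields the union $\bigcup_{\gamma \in R(BB^T)} P_\gamma$ together with $\{\rho, \psi\}$ as the nonzero part of the spectrum.

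The crux is then to determine whether $0$ belongs to $Spec(A)$, and this is governed exactly by the rank of $A$. Since $A$ is an $(c+s) \times (c+s)$ symmetric matrix, $0$ is an eigenvalue if and only if $r(A) < c + s$, which is precisely the case distinction in the statement. In the generic case $r(A) = c+s$ the matrix is nonsingular, $0$ is not an eigenvalue, and the spectrum is $\bigcup_{\gamma} P_\gamma \cup \{\rho, \psi\}$; in the deficient case $r(A) < c+s$ we adjoin $0$.

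The main obstacle I anticipate is the bookkeeping needed to guarantee that the displayed multiset has the right total size and that there is no double-counting or omission. One must verify that $\rho$ and $\psi$ are not already produced by the restricted mechanism — that is, that the eigenvectors realizing $\rho, \psi$ are the ones \emph{not} orthogonal to $j$, while all orthogonal-to-$j$ eigenvectors feed into the $P_\gamma$ families. Here I would invoke Lemma \ref{lem:main}: since $BB^T$ is $kk'$-stochastic and (by connectedness of $G$) irreducible, its Perron value $kk'$ is simple and \emph{not} restricted, so it never enters $R(BB^T)$; the value $\gamma = kk'$ is exactly the one whose quadratic $t^2 + t - kk' = 0$ has roots $\rho, \psi$, confirming these are tracked separately and not through the restricted sum. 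Care is also required because $BB^T$ may have repeated restricted eigenvalues, so $\bigcup_{\gamma \in R(BB^T)} P_\gamma$ must be read with multiplicities inherited from those of $BB^T$; tracking these multiplicities correctly, and reconciling the count of restricted $\gamma$'s plus the two quotient eigenvalues (plus possibly $0$) with the full dimension $c+s$, is the delicate part of the argument.
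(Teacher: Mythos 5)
Your overall route is the same one the paper takes (the paper in fact leaves Proposition \ref{prop:desc2} without a separate proof, treating it as a restatement of the discussion around equation \eqref{eq:basic}): the forward inclusion comes from Proposition \ref{prop:desc}, the values $\rho,\psi$ are certified by the equitable partition and Lemma \ref{lem:part}, and the presence of $0$ is exactly the rank condition $r(A)<c+s$. However, your final paragraph contains a genuinely false step: you assert that connectedness of $G$ forces $BB^{T}$ to be irreducible, hence that $kk'$ is never restricted and $P_{kk'}$ never enters the union. This is wrong. For the corona $K_{c}\circ K_{1}$ one has $B=I$, so $BB^{T}=I$ is reducible even though $G$ is connected; by Lemma \ref{lem:main} the Perron value $kk'$ of $BB^{T}$ \emph{is} then restricted, and $P_{kk'}$ genuinely contributes two eigenvalues of $A$ (for $c=2$ this is $P_{4}$, whose spectrum is precisely the roots of $t^{2}-t-1$ together with the roots of $t^{2}+t-1$). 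The paper's main theorem hinges on exactly this dichotomy between $BB^{T}$ reducible and irreducible, so the claim cannot be waved through. Fortunately the statement being proved is a set equality and does not require the disjointness you were trying to establish, so the error sits in a superfluous step rather than in the core argument --- but as written it is an incorrect assertion that must be removed.

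A second, smaller gap: you only argue the inclusion $Spec(A)\subseteq\bigcup_{\gamma}P_{\gamma}\cup\{\rho,\psi\}$ (plus possibly $0$); the reverse inclusion, namely that every $\mu\in P_{\gamma}$ for a restricted $\gamma$ actually \emph{is} an eigenvalue of $A$, needs the explicit construction: given $x\perp j$ with $BB^{T}x=\gamma x$ and a nonzero root $\mu$ of $t^{2}+t-\gamma$, set $y=\mu^{-1}B^{T}x$ and check that $(x,y)$ is a $\mu$-eigenvector of $A$ using $Jx=0$. Your worry about multiplicities, on the other hand, is moot, since the paper defines $Spec(A)$ as the \emph{set} of eigenvalues.
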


In fact, it turns out that there can be very little overlap between the different parts of the spectrum presented in Proposition \ref{prop:desc2}:
\begin{prop}\label{prop:d3}
\mbox{}\begin{enumerate}
\item
$\rho \notin P_{\gamma} , \forall \gamma \in R(BB^{T})$.
\item
$\psi \notin P_{kk^{'}}$.
\item
$0 \notin P_{\gamma}, \forall \gamma \neq 0$.
\item
$\gamma_{1} \neq \gamma_{2} \Rightarrow P_{\gamma_{1}} \cap P_{\gamma_{2}}=\emptyset$.
\end{enumerate}
\end{prop}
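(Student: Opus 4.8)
The plan is to exploit a single clean observation: for a fixed real number $\mu$, the membership $\mu \in P_{\gamma}$ is equivalent to $\gamma = \mu^{2}+\mu$, simply because $P_{\gamma}$ is by definition the set of roots of $t^{2}+t-\gamma$. Thus each real $\mu$ lies in $P_{\gamma}$ for exactly one value of $\gamma$, namely $\gamma=\mu(\mu+1)$. From this, items (3) and (4) fall out at once: $0 \in P_{\gamma}$ forces $\gamma = 0^{2}+0 = 0$, which is (3); and if some $\mu$ lay in both $P_{\gamma_{1}}$ and $P_{\gamma_{2}}$ we would get $\gamma_{1}=\mu^{2}+\mu=\gamma_{2}$, contradicting $\gamma_{1}\neq\gamma_{2}$, which is (4).

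For items (1) and (2) I would first rewrite $\rho^{2}+\rho$ and $\psi^{2}+\psi$ using that $\rho$ and $\psi$ are the roots of $t^{2}-(c-1)t-kk^{'}=0$. Substituting $\rho^{2}=(c-1)\rho+kk^{'}$ gives $\rho^{2}+\rho=c\rho+kk^{'}$, and likewise $\psi^{2}+\psi=c\psi+kk^{'}$. By the observation above, $\rho \in P_{\gamma}$ is equivalent to $\gamma=c\rho+kk^{'}$, while $\psi \in P_{kk^{'}}$ is equivalent to $c\psi+kk^{'}=kk^{'}$, i.e. to $c\psi=0$.

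To finish (1), I would invoke that $BB^{T}$ is nonnegative and $kk^{'}$-stochastic, so $kk^{'}$ is its Perron value (it is an eigenvalue via the positive eigenvector $j$); hence every eigenvalue $\gamma$ of $BB^{T}$ satisfies $\gamma \le kk^{'}$. Since $G$ is connected with a nonempty stable set, each vertex of $S$ has a neighbour in $C$, so $k\geq 1$ and $k^{'}=sk/c>0$; thus $kk^{'}>0$ and, being the Perron value, $\rho>0$. Therefore $c\rho+kk^{'}>kk^{'}\geq\gamma$ for every eigenvalue $\gamma$, so the required equality $\gamma=c\rho+kk^{'}$ is impossible and $\rho\notin P_{\gamma}$ for all $\gamma$, in particular for all $\gamma\in R(BB^{T})$. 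For (2), the equation $c\psi=0$ forces $\psi=0$ (as $c\geq 1$); but the product of the roots of $t^{2}-(c-1)t-kk^{'}$ equals $-kk^{'}<0$, so $\psi<0$, a contradiction, and hence $\psi\notin P_{kk^{'}}$.

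The arguments are short, so there is little genuine difficulty here; the one point that must be nailed down is the strict positivity $kk^{'}>0$ (equivalently $\rho>0$ and $\psi<0$), which is exactly what makes the inequality in (1) strict and rules out $\psi=0$ in (2). This is precisely where the connectivity hypothesis enters, and I would state it explicitly rather than leave it implicit.
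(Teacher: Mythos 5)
Your proof is correct, and for parts (1) and (2) it takes a genuinely different route from the paper's. The paper argues (1) structurally: the Perron eigenvector of $A$ is positive because $G$ is connected, whereas any eigenvector of $A$ built from a restricted eigenvalue $\gamma\in R(BB^{T})$ is orthogonal to $j$ and hence sign-changing, so $\rho$ cannot arise from any $P_{\gamma}$; it then gets (2) by Vieta, showing $\psi\in P_{kk^{'}}$ would force the other root of $t^{2}+t-kk^{'}$ to be $\rho$, contradicting (1). You instead run everything through the single algebraic observation that $\mu\in P_{\gamma}$ iff $\gamma=\mu^{2}+\mu$, reduce (1) to the impossibility of $\gamma=c\rho+kk^{'}$ given the bound $\gamma\le kk^{'}$ for eigenvalues of the nonnegative $kk^{'}$-stochastic matrix $BB^{T}$, and reduce (2) to $\psi=0$, excluded since $\rho\psi=-kk^{'}<0$. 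Your version is more elementary (no appeal to eigenvector positivity or to the structure of restricted eigenvectors) and actually proves a slightly stronger form of (1), namely $\rho\notin P_{\gamma}$ for every eigenvalue $\gamma$ of $BB^{T}$, not only the restricted ones; the paper's version is more conceptual and makes visible why the restriction to $R(BB^{T})$ is the natural hypothesis. Your treatment of (3) and (4) is essentially the paper's Vieta argument in different clothing. You were right to flag $kk^{'}>0$ as the point needing care: it is what makes your inequality in (1) strict and rules out $\psi=0$ in (2), and it does follow from connectivity exactly as you say.
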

\begin{proof}
(1) Consider the Perron value $\rho$ of $A$. Since the graph is connected, $A$ is an irreducible matrix and there is positive eigenvector corresponding to $\rho$. But any eigenvector arising from a $\gamma \in R(BB^{T})$ will be orthogonal to $j$ by definition and so will have both positive and negative entries.
Therefore, $\rho$ cannot belong to any $P_{\gamma}$.

(2) First observe that $\psi=\frac{-kk^{'}}{\rho}$ by Vieta's formula. Now suppose that $\psi \in P_{kk^{'}}$ and let $\beta$ be the other member of $P_{kk^{'}}$. Then $\beta=\rho$ by Vieta's formula - a contradiction to statement (1) of this proposition, which we had proved already.

(3) Obvious.

(4) Suppose that $P_{\gamma_{1}}=\{\mu,\tau_{1}\},P_{\gamma_{2}}=\{\mu,\tau_{2}\}$. Then Vieta's formula tells us that $\mu+\tau_{1}=-1$ and $\mu+\tau_{2}=-1$, immediately implying $\tau_{1}=\tau_{2}$ and $\gamma_{1}=\gamma_{2}$.
\end{proof}

\begin{prop}\label{prop:count}
$\delta(A) \geq 2|R(BB^{T})|+1$.
\end{prop}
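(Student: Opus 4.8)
The plan is to count the distinct eigenvalues contributed by the three parts of the spectrum described in Proposition \ref{prop:desc2} and to argue that the sets $P_\gamma$ contribute $2|R(BB^T)|$ distinct eigenvalues, to which $\rho$ adds at least one more. First I would recall that for each restricted eigenvalue $\gamma \in R(BB^T)$ the set $P_\gamma$ has exactly two elements, since its discriminant $1+4\gamma$ is strictly positive (as noted just before Proposition \ref{prop:desc2}). Thus $\bigcup_{\gamma \in R(BB^T)} P_\gamma$ would contain $2|R(BB^T)|$ elements \emph{provided these sets are pairwise disjoint}, which is precisely statement (4) of Proposition \ref{prop:d3}. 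Hence $\left| \bigcup_{\gamma \in R(BB^T)} P_\gamma \right| = 2|R(BB^T)|$.

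Next I would bring in the Perron value $\rho$. By Proposition \ref{prop:d3}(1) we have $\rho \notin P_\gamma$ for every $\gamma \in R(BB^T)$, so $\rho$ is an eigenvalue of $A$ that lies outside the union of the $P_\gamma$. Adjoining it to the disjoint union therefore strictly increases the count by one, giving at least $2|R(BB^T)| + 1$ distinct eigenvalues of $A$. Since $\rho \in Spec(A)$ by the remark following the quotient matrix computation (the roots of $t^2-(c-1)t-kk^{'}=0$ are always eigenvalues of $G$, the larger being the Perron value), every element counted is a genuine eigenvalue, and we conclude $\delta(A) \geq 2|R(BB^T)| + 1$.

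The one point requiring care is that the elements of $\bigcup P_\gamma$ are themselves genuine eigenvalues of $A$ and not spurious roots; this is guaranteed by the third bullet of Proposition \ref{prop:desc} together with the reformulation in Proposition \ref{prop:desc2}, where each $\gamma \in R(BB^T)$ yields a restricted eigenvector $x$ with $BB^Tx = (\mu+\mu^2)x$, so that each $\mu \in P_\gamma$ solves $t^2+t-\gamma=0$ and is indeed in $Spec(A)$. I do not expect a serious obstacle here: the entire argument is essentially a disjointness-and-counting bookkeeping exercise built on the structural facts already established. The only mild subtlety is to ensure we do not need $\psi$ or $0$ for the lower bound — we do not, since $\rho$ alone suffices to push the count past $2|R(BB^T)|$ — so the inequality holds uniformly in both cases of Proposition \ref{prop:desc2}.
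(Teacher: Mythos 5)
Your proposal is correct and follows essentially the same route as the paper's own (much terser) proof: both rest on $|P_{\gamma}|=2$, the pairwise disjointness of the $P_{\gamma}$ from Proposition \ref{prop:d3}(4), and the fact that $\rho$ lies outside every $P_{\gamma}$ by Proposition \ref{prop:d3}(1). Your version simply makes explicit the bookkeeping and the verification that all counted values are genuine eigenvalues, which the paper leaves implicit.
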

\begin{proof}
From part (1) of the previous proposition we know that $\rho$ is not contained in any $P_{\gamma}$. We also know that $|P_{\gamma}|=2$ for each $\gamma \in R(BB^{T})$.
\end{proof}


\begin{prop}\label{prop:rank}
$r(A) \leq 2c$.
\end{prop}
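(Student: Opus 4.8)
The plan is to exploit the block structure of $A$ together with the subadditivity of rank under a partition into row blocks. Writing $A$ with its top $c$ rows and its bottom $s$ rows separated, the two row blocks are $[\,J-I \mid B\,]$ and $[\,B^{T} \mid 0\,]$. Since the row space of $A$ is contained in the sum of the row spaces of these two blocks, rank is subadditive over the partition and we obtain $r(A) \leq r([\,J-I \mid B\,]) + r([\,B^{T} \mid 0\,])$.

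Next I would bound each summand separately. The first block $[\,J-I \mid B\,]$ has only $c$ rows, so its rank is at most $c$. The second block $[\,B^{T} \mid 0\,]$ has the same rank as $B^{T}$, namely $r(B)$, because appending a zero block alters neither the row space nor the rank; and since $B$ is a $c \times s$ matrix it has only $c$ rows, which forces $r(B) \leq c$. Adding the two bounds gives $r(A) \leq c + c = 2c$, as claimed.

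An alternative route, which additionally yields the sharper identity $r(A) = c + r\bigl(B^{T}(J-I)^{-1}B\bigr)$, uses the Schur complement of Lemma \ref{lem:schur}. When $c \geq 2$ the leading principal block $J-I$ is invertible, since its eigenvalues are $c-1$ and $-1$, both nonzero; Lemma \ref{lem:schur} then applies with $X = J-I$ and gives $r(A) = r(J-I) + r(A/_{J-I}) = c + r\bigl(-B^{T}(J-I)^{-1}B\bigr)$. As the Schur complement factors through $B$, its rank is at most $r(B) \leq c$, again producing $r(A) \leq 2c$.

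I expect no genuine obstacle here: the argument rests only on the elementary facts that rank is subadditive over row blocks and that $r(B) \leq c$ because $B$ has $c$ rows. The single point meriting a line of care is the observation $r([\,B^{T}\mid 0\,]) = r(B)$, and, should one prefer the Schur-complement variant, the verification that $J-I$ is invertible (which fails only in the degenerate case $c=1$, where the subadditivity argument is unaffected). For this reason I would present the subadditivity argument as the primary proof, since it is fully general and requires no invertibility hypothesis.
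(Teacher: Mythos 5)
Your proposal is correct, and your primary argument is genuinely different from the paper's. The paper proves the bound via the Schur complement (Lemma \ref{lem:schur}): it computes $A/_{J-I}=-\bigl(B^{T}B-\tfrac{k^{2}}{c-1}J\bigr)$, invokes Lemma \ref{lem:rank} to strip off the $J$ term, and arrives at the exact identity $r(A)=c+r(B)$, from which $r(A)\leq 2c$ follows since $B$ has $c$ rows. Your subadditivity-over-row-blocks argument reaches the same inequality with less machinery: it needs neither the invertibility of $J-I$ nor the stochasticity hypotheses behind Lemma \ref{lem:rank}, and it covers the degenerate case $c=1$ without comment. What it gives up is the sharper equality $r(A)=c+r(B)$; however, an inspection of the proof of Theorem \ref{thm:split4_class} shows that only the inequality $r(A)\leq 2c$ is ever used, so your more elementary route fully suffices for the paper's purposes. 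Your secondary, Schur-complement variant is essentially the paper's own argument, except that you bound $r\bigl(B^{T}(J-I)^{-1}B\bigr)\leq r(B)$ directly rather than computing it exactly, which is a perfectly legitimate shortcut. All the individual steps you flag as needing care --- that appending a zero block preserves rank, and that $J-I$ has eigenvalues $c-1$ and $-1$ --- check out.
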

\begin{proof}
According to Lemma \ref{lem:schur}, $r(A)=r(J-I)+r(A/_{J-I})$. Since $(J-I)^{-1}=\frac{1}{c-1}J-I$, we have that $$A/_{J-I}=B^{T}(\frac{1}{c-1}J-I)B=-(B^{T}B-\frac{k^{2}}{c-1}J).$$
The rank of $J-I$ is $c$ while the rank of $B^{T}B-\frac{k^{2}}{c-1}J$ is equal, by Lemma \ref{lem:rank}, to $r(B^{T}B)$. Since $r(XX^{T})=r(X)$ we get that $r(A)=c+r(B)$
and finally, $r(B) \leq c$ since $B$ has $c$ rows. 
\end{proof}

Before we arrive at the culmination, let us observe that the diagonal entries of $BB^{T}$ are all equal to $k^{'}$ and therefore $$\Tr{BB^{T}}=ck^{'}.$$

We are now in a position to state and prove our main result:
\begin{thm}\label{thm:split4_class}
Let $G$ be a connected bidegreed split graph of diameter $3$, with maximal clique and stable set sizes $c,s$, respectively. 
Then $G$ has exactly four distinct eigenvalues if and only if it is of one of the following forms:
\begin{itemize}
\item
$G=K_{c} \circ K_{1}$.
\item
$G=G_{D}$ for a $(v=c,b=s,r,k,\lambda)$-design $D$ such that $r=\lambda^{2}$ and that $D$ has at least one pair of disjoint blocks.
\end{itemize}
\end{thm}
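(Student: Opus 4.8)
The plan is to use the spectral characterization built up in Propositions~\ref{prop:desc2}, \ref{prop:d3}, and \ref{prop:count} to pin down exactly how many restricted eigenvalues $BB^{T}$ can have, and then translate the resulting algebraic constraints back into design-theoretic language via Lemma~\ref{lem:design}. Since $G$ has diameter $3$, Lemma~\ref{lem:nonsym}-type reasoning tells us $s>c$, so $r(A)<c+s$ is forced (indeed $r(A)\le 2c<c+s$ by Proposition~\ref{prop:rank}), and hence $0\in Spec(A)$. By Proposition~\ref{prop:desc2} the spectrum is $\bigcup_{\gamma\in R(BB^{T})}P_{\gamma}\cup\{\rho,\psi,0\}$. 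Proposition~\ref{prop:d3} guarantees all these pieces are pairwise disjoint except possibly for coincidences among $\{\rho,\psi,0\}$ and the $P_\gamma$; combined with Proposition~\ref{prop:count}, the requirement $\delta(A)=4$ should force $|R(BB^{T})|=1$, i.e.\ $BB^{T}$ has exactly one restricted eigenvalue $\gamma_0$. The main bookkeeping step is to carefully count: the two roots $\{\rho,\psi\}$, the value $0$, and the two-element set $P_{\gamma_0}$ would naively give five eigenvalues, so exactly one coincidence must occur, and Proposition~\ref{prop:d3} restricts where it can happen.

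\medskip

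\emph{Extracting the design structure.} Once I know $BB^{T}$ has a single restricted eigenvalue, I would invoke Lemma~\ref{lem:cao} on an appropriate $\omega$-stochastic matrix. The matrix $BB^{T}$ is $kk^{'}$-stochastic with diagonal entries all equal to $k^{'}$. Having only one restricted eigenvalue means $BB^{T}$ has $\delta=2$ (the Perron value $kk^{'}$ together with $\gamma_0$), so Lemma~\ref{lem:cao} yields
\begin{equation*}
BB^{T}=\frac{kk^{'}-\gamma_0}{c}J+\gamma_0 I.
\end{equation*}
This is precisely the form $\lambda J+(r-\lambda)I$ required by Lemma~\ref{lem:design}, so reading off $\lambda=\tfrac{kk^{'}-\gamma_0}{c}$ and $r-\lambda=\gamma_0$, the matrix $B$ is the incidence matrix of a $(v=c,b=s,r,k,\lambda)$-design $D$, and $G=G_{D}$. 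The condition that $D$ have a pair of disjoint blocks corresponds exactly to the diameter being genuinely $3$ (two blocks sharing no point means two $S$-vertices at distance $3$), which I would pin down using Theorem~\ref{thm:sym} and Lemma~\ref{lem:nonsym}.

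\medskip

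\emph{Deriving $r=\lambda^{2}$ and isolating the corona case.} The numerical condition $r=\lambda^{2}$ should emerge from the single forced coincidence identified in the counting step. Writing $\gamma_0=r-\lambda$ and using $kk^{'}=r\lambda$ (which follows from $\Tr{BB^{T}}=ck^{'}$ and the stochasticity, or directly from $k^{'}=sk/c$ together with design identities), I would substitute into the requirement that one element of $P_{\gamma_0}$ coincide with $\psi$ or with $0$. The relation $0\in P_{\gamma_0}$ happens iff $\gamma_0=0$, which degenerates; the surviving nondegenerate coincidence $\psi\in P_{\gamma_0}$, after applying Vieta ($\psi=-kk^{'}/\rho$) and simplifying, collapses to the single clean equation $r=\lambda^{2}$. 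Finally, the corona $K_c\circ K_1$ arises as the boundary/degenerate case where $k=1$ (each block is a single point, $\lambda=0$ in the trivial reading but realized as pendant vertices): here $B=I$ up to permutation, $BB^{T}=I$ has only the eigenvalue $1$, and the coincidence pattern differs, giving the second family. For the converse direction, I would simply verify that both families produce exactly the spectrum $\{\rho,\psi,0,\mu\}$ by direct substitution back into Proposition~\ref{prop:desc2}.

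\medskip

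\textbf{The hard part} will be the coincidence-counting in the first paragraph: disentangling precisely which overlaps among $\{\rho,\psi,0\}$ and $P_{\gamma_0}$ are permitted by Proposition~\ref{prop:d3}, and showing that demanding $\delta(A)=4$ forces exactly one such overlap \emph{and} forces it to be of the specific type that yields $r=\lambda^{2}$ rather than a degenerate collapse. Separating the genuine design family from the corona family—which is really the edge case $k=1$ where the ``design'' becomes a perfect matching between points and blocks—will require care to ensure no solutions are lost or double-counted.
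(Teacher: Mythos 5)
Your overall skeleton for the design case (force $|R(BB^{T})|=1$, apply Lemma \ref{lem:cao}, read off a design via Lemma \ref{lem:design}, and extract $r=\lambda^{2}$ from the coincidence $\psi\in P_{\gamma_{0}}$) matches the paper. But there are two genuine gaps. First, your opening step is false: you claim that diameter $3$ gives $s>c$ by ``Lemma \ref{lem:nonsym}-type reasoning,'' hence $r(A)<c+s$ and $0\in Spec(A)$ always. Lemma \ref{lem:nonsym} rests on Fisher's inequality and therefore only applies once you already know $G=G_{D}$ for a (non-trivial) design, which you have not yet established at that point. The corona $K_{c}\circ K_{1}$ is a direct counterexample to your claim: it has diameter $3$, $s=c$, $B=I$, and $0\notin Spec(A)$. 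Since your entire coincidence count is predicated on $0$ being in the spectrum, the argument as framed can never correctly produce the corona family; you only recover it by asserting it afterwards as a ``boundary case $k=1$'' without deriving why $k=1$, $c=s$ and $B=I$ are forced.

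Second, and relatedly, you skip the dichotomy that organizes the paper's proof: whether $BB^{T}$ is reducible or irreducible. Lemma \ref{lem:cao} \emph{requires} irreducibility, and by Lemma \ref{lem:main} the Perron value $kk^{'}$ is itself a restricted eigenvalue precisely when $BB^{T}$ is reducible. In that case ``at most one restricted eigenvalue'' forces $\delta(BB^{T})=1$, i.e.\ $BB^{T}=kk^{'}I$ by Lemma \ref{lem:1}; the trace identity $\Tr{BB^{T}}=ck^{'}$ then gives $k=1$, and the rank bound $r(A)\le 2c$ of Proposition \ref{prop:rank}, combined with the fact that all four eigenvalues $\rho,\psi,P_{kk^{'}}$ are nonzero and distinct (so $r(A)=c+s$), forces $s\le c$, hence $s=c$ and $G=K_{c}\circ K_{1}$. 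Only in the irreducible case is $kk^{'}$ non-restricted, so that the single restricted eigenvalue $\gamma_{0}$ gives $\delta(BB^{T})=2$ and Lemma \ref{lem:cao} applies; \emph{then} Lemma \ref{lem:design} yields a design, Lemma \ref{lem:nonsym} yields $s>c$, and $0$ enters the spectrum. Your proposal conflates these two branches, applying the irreducible-case machinery globally and treating the reducible branch as an afterthought. The algebra you sketch for $r=\lambda^{2}$ (via $\gamma_{0}=r-\lambda$, $kk^{'}=rk=\lambda^2 k$... more precisely $kk'=kr$ with $k'=r$, and Vieta for $\psi$) is consistent with the paper's computation, and your identification $\lambda=(kk^{'}-\gamma_{0})/c$ agrees with the paper's $\lambda=k^{'}\frac{k-1}{c-1}$, so that part would go through once the case split is repaired.
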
\begin{proof}
Let us first prove that if $G$ has four distinct eigenvalues, then it must be of one of the forms we have indicated. Since $\delta(A)=4$ we immediately deduce from Proposition \ref{prop:count} that $BB^{T}$ can have at most one restricted eigenvalue. Let us first consider the case that $BB^{T}$ is reducible. Then from Lemma \ref{lem:main} we see that $kk^{'}$ is the only possible restricted eigenvalue of $BB^{T}$. Therefore $BB^{T}$ has exactly one distinct eigenvalue and $BB^{T}=kk^{'}I$ by Lemma \ref{lem:1}. Observe now that $\Tr{BB^{T}}=ck^{'}$ as remarked before; on this other hand, the trace must be equal to $c \cdot kk^{'}$. Therefore $k=1$.

Let us now pause to count the distinct eigenvalues of $A$, according to Proposition \ref{prop:desc2}: we have $\rho,\psi$ and two more in $P_{kk^{'}}$ and by Proposition \ref{prop:d3} these are four different numbers. Therefore we must have $r(A)=c+s$ (or otherwise $0$ would be an eigenvalue of $A$ as well, raising $\delta(A)$ to five and creating a contradiction). But from Proposition \ref{prop:rank} we have $r(A) \leq 2c$ and so $c \geq s$. This in turn implies $k^{'}=\frac{sk}{c}=\frac{s}{c} \leq 1$ and so $k=1$ and $c=s$. But this means that we have arrived at the conclusion that $G=K_{c} \circ K_{1}$.

Now we take up the case when $BB^{T}$ is irreducible. From Lemma \ref{lem:main} we know that $kk^{'}$ is not a restricted eigenvalue of $BB^{T}$ and so does not contribute to the spectrum of $A$. Therefore there is exactly one more restricted eigenvalue $\gamma$ which does contribute (if there were two, then we would have $\delta(A)\geq 5$, an impossibility). Since $kk^{'}$ is a simple eigenvalue of $BB^{T}$ we can easily determine $\gamma$:
$$
ck^{'}=\Tr{BB^{T}}=kk^{'}+(c-1)\gamma \Rightarrow \gamma=k^{'}\frac{c-k}{c-1}.
$$
We can apply Lemma \ref{lem:cao} to $BB^{T}$ (with $\omega=kk^{'}$, of course) and obtain that 
$$
BB^{T}=k^{'}\frac{k-1}{c-1}J+k^{'}\frac{c-k}{c-1}I.
$$

If we now let $\lambda=k^{'}\frac{k-1}{c-1}$ and $(r-\lambda)=k^{'}\frac{c-k}{c-1}$, then
Lemma \ref{lem:design} tells us that $B$ is the incidence matrix of some $(c,k,\lambda)$-design $D$ and therefore $G=G_{D}$. Furthermore, we know by Lemma \ref{lem:nonsym} that $s>c$ and therefore $r(A)<c+s$ and $0$ is an eigenvalue of $A$. 

Thus we see that to have $\delta(A)=4$ we must have $\psi \in P_{\gamma}$. This means that $\psi^{2}+\psi=\gamma$. To derive the implications of this condition we need to explicitly write out $\psi$ (something we have managed to avoid doing so far):
$$
\psi=\frac{(c-1)-\sqrt{(c-1)^{2}+4kk^{'}}}{2}.
$$
Therefore we see that:
$$
\psi+\psi^{2}=\frac{c(c-1)-c\sqrt{(c-1)^{2}+4kk^{'}}+2kk^{'}}{2}=\gamma=r-\lambda.
$$
Note that $r=\lambda+(r-\lambda)=k^{'}\frac{k-1}{c-1}+k^{'}\frac{c-k}{c-1}=k^{'}$ and therefore we can write $r$ instead of $k^{'}$. We are going to perform some algebraic manipulations, using the fact that $r(k-1)=\lambda(c-1)$:
$$
c\sqrt{(c-1)^{2}+4kr}=c(c-1)+2kr-2(r-\lambda)=c(c-1)+2(r(k-1)+\lambda)=
$$
$$
=c(c-1)+2(\lambda(c-1)+\lambda)=c(c-1)+2\lambda c.
$$
Dividing by $c$ we get:
$$
\sqrt{(c-1)^{2}+4kr}=(c-1)+2\lambda.
$$
Taking the square of both sides and simplifying then leads to
$$
rk=\lambda(c-1)+\lambda^{2}=r(k-1)+\lambda^{2} \Rightarrow r=\lambda^{2}.
$$

On the other hand, it is easy to verify by computation that graphs of the forms indicated in the theorem have exactly four distinct eigenvalues.


\end{proof}

\section{Some examples and discussion}\label{sec:fujiwara}

For examples of small combinatorial designs we shall draw on the tables in \cite[Chapter 1]{Designs} and on the online data on E. Spence's webpage \cite{SpenceDataNonSym}. The former provides exhaustive coverage up to $r=41$, with valuable commentary on the interrelationships between the various designs. The latter is less exhaustive (though it has some larger designs) but lists the actual incidence matrices for the designs, which can be very helpful in exploring their properties. 

The first parameter set that satisfies $r=\lambda^{2}$ is $(7,21,9,3,3)$. 
This is parameter set number 31 on \cite[p. 15]{Designs} and we can learn there that there are $10$ non-isomorphic such designs. We downloaded them from \cite{SpenceDataNonSym} and constructed the associated split graphs $G_{D}$ for each design. It turns out that one of the graphs (the first) has diameter $2$ while the rest have diameter $3$. The reason for the first graph having diameter $2$ is that every pair of blocks in the corresponding design has a non-empty intersection.

Indeed, let us list here the first design (with diameter $2$) and then the fifth and the tenth designs (with diameter $3$) in the compact notation of \cite{Designs}, as a $k \times b$ matrix, where each column lists the points of the corresponding block.

$$
\footnotesize 
D_{1}=\begin{pmatrix}
1&1&1&1&1&1&1&1&1&2&2&2&2&2&2&3&3&3&3&3&3\\
2&2&2&4&4&4&6&6&6&4&4&4&5&5&5&4&4&4&5&5&5\\
3&3&3&5&5&5&7&7&7&6&6&6&7&7&7&7&7&7&6&6&6\\
\end{pmatrix},
$$
$$
\footnotesize 
D_{5}=\begin{pmatrix}
1&1&1&1&1&1&1&1&1&2&2&2&2&2&2&3&3&3&3&3&3\\
2&2&2&4&4&4&5&5&6&4&4&4&5&5&6&4&4&4&5&5&6\\
3&3&3&5&6&7&6&7&7&5&6&7&6&7&7&5&6&7&6&7&7\\
\end{pmatrix},
$$
$$
\footnotesize 
D_{10}=\begin{pmatrix}
1&1&1&1&1&1&1&1&1&2&2&2&2&2&2&3&3&3&3&4&4\\
2&2&2&3&3&4&5&5&6&3&3&4&4&5&6&4&4&5&6&5&5\\
3&4&5&4&6&7&6&7&7&5&7&6&7&6&7&5&6&7&7&6&7\\
\end{pmatrix}.
$$

All in all there are ten non-isomorphic split bidegreed graphs with diameter $3$ and four distinct eigenvalues on $28$ vertices: the nine graphs associated with designs and $K_{14} \circ K_{1}$.

Observe that the block $\{1,2,3\}$ is repeated three times in $D_{5}$, whereas $D_{10}$ has no repeated blocks. Furthermore, observe that $D_{1}$ is obtained by replicating three times the blocks of the Fano plane (that is the unique $(7,3,1)$-design). We will now use this idea to produce infinite families of examples.

\subsection{The Fujiwara construction}
This construction reported in this subsection was suggested to us by Y. Fujiwara. The following fact is obvious from the definitions:

\begin{prop}\label{prop:f}
Let $D_{0}$ be a $(v,b,r,k,1)$-design and let $D$ be the family of sets consisting of $r$ copies of each block of $D_{0}$. Then $D$ is a $(v,br,r^{2},k,r)$-design.
\end{prop}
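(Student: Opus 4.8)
The plan is to verify directly that $D$ satisfies each defining condition of a $(v, br, r^2, k, r)$-design, since passing from $D_0$ to $D$ alters only the block counts and not the underlying point set. First I would observe that $D$ has the same point set $E$ of size $v$ as $D_0$, and that every member of $D$ is a copy of some block of $D_0$, hence a $k$-subset of $E$; this settles the point count and the block size $k$ at once. Because $D_0$ has $b$ blocks and each is replicated $r$ times, $D$ has $br$ blocks, which accounts for the second parameter.

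Next I would pin down the pairwise parameter $\lambda = r$. Fixing two distinct points $e, f \in E$, the hypothesis that $D_0$ is a $(v,b,r,k,1)$-design means there is exactly one block of $D_0$ containing both $e$ and $f$. In $D$ this single block is replaced by its $r$ copies, each still containing the pair, and no other block of $D$ contains $\{e,f\}$; hence exactly $r$ blocks of $D$ cover the pair, giving $\lambda = r$. The replication number then follows by the same copying argument: a point $e$ lies in exactly $r$ blocks of $D_0$, each replicated $r$ times, so $e$ lies in $r^2$ blocks of $D$. As a consistency check one may confirm this against the identity $r = \lambda \frac{v-1}{k-1}$: for $D_0$ with $\lambda=1$ this reads $r = \frac{v-1}{k-1}$, whence the same identity applied to $D$ yields replication number $r \cdot \frac{v-1}{k-1} = r^2$, in agreement.

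There is no genuine obstacle here, and the author's remark that the statement is ``obvious from the definitions'' is apt; the calculations are entirely routine. The only point requiring any care is that $D$ must be read as a multiset of blocks, so that the $r$ copies of a given block count as $r$ distinct blocks rather than collapsing into one. This is precisely the convention sanctioned by the remark following Definition \ref{def:assoc}, which legitimizes the counting above and completes the verification.
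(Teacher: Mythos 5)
Your verification is correct and is exactly the routine counting the paper has in mind when it declares the proposition ``obvious from the definitions'' and omits a proof: each pair of points lies in one block of $D_{0}$, hence in its $r$ copies in $D$, and each point's replication number is multiplied by $r$. Your explicit note that $D$ must be treated as a multiset is a worthwhile touch, as it is precisely the convention the paper adopts in the remark following Definition~\ref{def:assoc}.
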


A design $D$ is called \emph{quasi-symmetric} if there exist parameters $x,y$ $(x<y)$ so that any two blocks of $D$ intersect in either $x$ or in $y$ elements. A quasi-symmetric design with $x=0$ is guaranteed to have a pair of non-intersecting blocks and thus to give rise to a $G_{D}$ with diameter $3$.
\begin{lem}\cite[p. 431]{Designs}\label{lem:xy}
Any $(v,k,1)$-design with $b>v$ must be quasi-symmetric with $x=0$ and $y=1$.
\end{lem}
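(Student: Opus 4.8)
The plan is to pin down the possible block-intersection sizes directly from the hypothesis $\lambda=1$ and then verify that precisely the two values $0$ and $1$ occur. \emph{First} I would record the fundamental observation that in a $(v,k,1)$-design any two distinct blocks meet in at most one point: if two blocks shared two points $e,f$, then the pair $\{e,f\}$ would be contained in at least two blocks, contradicting $\lambda=1$. Hence every pairwise intersection number lies in $\{0,1\}$, so the only candidate quasi-symmetry parameters are $x=0$ and $y=1$. What remains is to show that both values are genuinely attained, since it is exactly this feature that separates a quasi-symmetric design from a symmetric one.

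\emph{Next} I would use the hypothesis $b>v$ to force the value $0$ to occur. Since $b>v$, the design $D$ is non-symmetric by definition, so Theorem \ref{thm:sym} tells us, in contrapositive form, that it is \emph{not} the case that every two blocks intersect in $\lambda=1$ point. Combined with the at-most-one bound from the previous step, some pair of blocks must meet in exactly $0$ points, so $x=0$ is attained.

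\emph{Then} I would show that the value $1$ is attained, using non-triviality $k<v$ (which follows because $k=v$ together with $\lambda=1$ would permit at most a single block, contradicting $b>v$). Fix any block $\mathbf{b}_{1}$ and a point $q\notin\mathbf{b}_{1}$, which exists since $k<v$, and any point $p\in\mathbf{b}_{1}$. The pair $\{p,q\}$ lies in a unique block $\mathbf{b}_{2}$; as $q\notin\mathbf{b}_{1}$ we have $\mathbf{b}_{2}\neq\mathbf{b}_{1}$, while $p\in\mathbf{b}_{1}\cap\mathbf{b}_{2}$ forces this intersection to consist of exactly one point. Thus $y=1$ is attained, and together the last two steps show that $D$ is quasi-symmetric with $x=0$ and $y=1$.

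The genuinely delicate point—and the one I would expect to be the main obstacle—is \emph{not} the bound that intersections are at most one, which is immediate, but rather establishing that the design is honestly quasi-symmetric rather than degenerate: one must rule out both the symmetric case in which all intersections equal $1$ and the impossible all-disjoint case in which all intersections equal $0$. The hypothesis $b>v$ is precisely what excludes the former, via Theorem \ref{thm:sym}, and non-triviality excludes the latter; keeping careful track of where each hypothesis is consumed is the crux of the argument.
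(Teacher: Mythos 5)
Your proof is correct. The paper does not prove Lemma \ref{lem:xy} at all---it is quoted from \cite[p.~431]{Designs}---and your argument is the standard one: $\lambda=1$ forces every pairwise block intersection into $\{0,1\}$, the hypothesis $b>v$ together with Theorem \ref{thm:sym} forces some intersection to be $0$, and non-triviality produces an intersecting pair, so both values are attained. You also correctly identify where each hypothesis is consumed, which is exactly the content the citation leaves implicit.
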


A \emph{Steiner triple system} $STS(v)$ is a $(v,3,1)$-design. 
\begin{thm}\cite[p. 70]{Designs}
A Steiner triple system $STS(v)$ exists if and only if $v \equiv 1,3 \ (mod \ 6)$.
\end{thm}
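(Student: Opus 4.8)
The final statement is the existence theorem for Steiner triple systems: $STS(v)$ exists if and only if $v \equiv 1, 3 \pmod 6$. This is the classical Kirkman existence theorem. Let me sketch how I would prove it.

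\bigskip

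The plan is to prove the two directions separately, with the necessity being elementary counting and the sufficiency requiring a construction. For the \emph{necessity} direction, suppose an $STS(v)$ exists, that is, a $(v,3,1)$-design. I would invoke the standard design-parameter identities recorded earlier in the paper. Since $k=3$ and $\lambda=1$, the replication number satisfies $r = \lambda\frac{v-1}{k-1} = \frac{v-1}{2}$, which forces $v$ to be odd, so $v \equiv 1,3,5 \pmod 6$. To rule out $v \equiv 5$, I would count the total number of blocks: double-counting incidences gives $bk = vr$, hence $b = \frac{vr}{k} = \frac{v(v-1)}{6}$. For $b$ to be an integer, $6 \mid v(v-1)$; combined with $v$ odd this eliminates $v \equiv 5 \pmod 6$ (where $v(v-1) \equiv 5\cdot 4 = 20 \equiv 2 \pmod 6$) and leaves exactly $v \equiv 1, 3 \pmod 6$. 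This half is routine arithmetic on the design identities.

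\bigskip

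The \emph{sufficiency} direction is the substantial part and the main obstacle. Here one must actually construct an $STS(v)$ for every admissible $v$. I would proceed by the classical recursive-doubling approach. First establish small base cases directly: $STS(1)$ and $STS(3)$ are trivial, $STS(7)$ is the Fano plane (the unique $(7,3,1)$-design mentioned earlier in the paper), and $STS(9)$ is the affine plane $AG(2,3)$. Then I would supply composition constructions that build larger systems from smaller ones. The two workhorse constructions are: if $STS(u)$ exists then $STS(2u+1)$ exists (a doubling construction using the triangle-structure of the new points together with a one-factorization), and if $STS(u)$ exists then $STS(2u+7)$ exists (a variant doubling). Together with a product-type construction yielding $STS(uw)$ from $STS(u)$ and $STS(w)$, these recurrences let one reach every residue.

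\bigskip

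The hard part will be verifying that the composition constructions actually cover all of $v \equiv 1, 3 \pmod 6$ and that each construction genuinely produces a valid Steiner triple system (every pair in exactly one triple). The residue bookkeeping is where care is required: one checks that starting from the bases $\{7,9\}$ and applying $u \mapsto 2u+1$ and $u \mapsto 2u+7$ repeatedly sweeps out all admissible values, handling the residue classes $1$ and $3 \pmod 6$ in tandem. Verifying the doubling constructions relies on the existence of a one-factorization of $K_{2n}$, which must be invoked as an auxiliary combinatorial fact. Given that this theorem is a deep and well-known classical result of Kirkman, and that the paper cites \cite[p. 70]{Designs} for it, I would in practice simply defer to that reference rather than reproduce the full Kirkman construction; the realistic proof in context is the short necessity argument above, with sufficiency quoted from the cited design-theory text.
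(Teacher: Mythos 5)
The paper offers no proof of this statement---it is quoted verbatim from the cited design-theory text---and your proposal ultimately does the same, deferring the hard sufficiency direction to that reference after a standard sketch of the recursive doubling constructions. Your necessity argument (using $r=\lambda\frac{v-1}{k-1}=\frac{v-1}{2}$ and $b=\frac{v(v-1)}{6}$ to force $v\equiv 1,3\pmod 6$) is complete and correct, so this is consistent with the paper's treatment.
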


Now let us take $D_{0}=STS(v)$. Then $D_{0}$ has replication number $\frac{v-1}{2}$. Let $D$ be the multiset consisting of $\frac{v-1}{2}$ copies of $D_{0}=STS(v)$ and note from Proposition \ref{prop:f} that $D$ is a $(v,3,\frac{v-1}{2})$-design which satisfies the condition $r=\lambda^{2}$. Finally, for $v \geq 8$ we will have $b=\frac{v(v-1)}{6}>v$ and thus $D_{0}$ (and $D$) will have a pair of disjoint blocks by Lemma \ref{lem:xy}.

\section{Some non-bidegreed split graphs with four eigenvalues}
The construction of split graphs from combinatorial designs with $r=\lambda^{2}$ can also produce examples of non-bidegreed split graphs with four eigenvalues - provided that we suitably generalize our notion of a design. 

\begin{defin}(cf. \cite[Chapter 38]{Designs})
Let $D$ be a family of subsets of $E=\{x_{1},x_{2},\ldots,x_{v}\}$. The family $D$ is called a \emph{$(r,\lambda)$-design over $E$} if for every two distinct elements $e,f$ of $E$ there are exactly $\lambda$ sets in $D$ that contain both $e$ and $f$. 
\end{defin}

A $(r,\lambda)$-design is in fact the structure obtained if we drop the condition that all blocks have the same size in a $(v,k,\lambda)$-design. If $B$ is the incidence matrix of a $(r,\lambda)$-design $D$ then we still have:
$$
BB^{T}=\lambda J+(r-\lambda)I.
$$
Reprising the calculations we have performed in Section \ref{sec:class} we see that the condition $r=\lambda^{2}$ ensures that $G_{D}$ indeed has exactly four distinct eigenvalues. Formally, we can state this as:
\begin{thm}\label{thm:rl}
Let $D$ be a $(r,\lambda)$-design with at least one pair of disjoint blocks. If $r=\lambda^{2}$, then $G_{D}$ is a split graph of diameter $3$ and with four distinct eigenvalues.
\end{thm}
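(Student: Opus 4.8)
The plan is to mimic the analysis of Section \ref{sec:class} almost verbatim, since the only structural feature we actually used there was the identity $BB^{T}=\lambda J+(r-\lambda)I$, and this still holds for an $(r,\lambda)$-design by hypothesis. First I would set up the adjacency matrix of $G_{D}$ in the same block form $\left[\begin{smallmatrix} J-I & B\\ B^{T}&0\end{smallmatrix}\right]$, where $B$ is the $v\times b$ incidence matrix. The crucial point to check is that the earlier spectral description still applies: in the bidegreed case we used $BB^{T}J=kk'J$, but here the blocks need not have equal size, so $B$ need not be column-regular. However, $BB^{T}=\lambda J+(r-\lambda)I$ is still row-regular with row sum $\lambda v+(r-\lambda)=:\omega$, so $BB^{T}$ remains $\omega$-stochastic, and that is all that the eigenvalue bookkeeping in Propositions \ref{prop:desc2}--\ref{prop:count} requires.

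Next I would compute the eigenvalues of $BB^{T}$ directly from the explicit form $\lambda J+(r-\lambda)I$: since $J$ has eigenvalues $v$ (simple, eigenvector $j$) and $0$, the matrix $BB^{T}$ has eigenvalues $\lambda v+(r-\lambda)=\omega$ (simple, non-restricted, corresponding to $j$) and $r-\lambda$ (with multiplicity $v-1$, all restricted). Thus $R(BB^{T})=\{r-\lambda\}$, a single restricted eigenvalue, exactly as in the irreducible bidegreed case. The eigenvalues of $G_{D}$ arising from the clique/quotient are $\rho,\psi$, the two roots of a quadratic governed by $\omega$; the restricted eigenvalue $r-\lambda$ contributes the pair $P_{r-\lambda}$ via $t^{2}+t-(r-\lambda)=0$; and the disjoint-blocks hypothesis forces diameter $3$, hence $s=b>v$ by the same counting argument, so $r(A)<v+b$ and $0$ is also an eigenvalue.

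The key computation is then to show that the condition $r=\lambda^{2}$ collapses the count from five apparent eigenvalues $\{\rho,\psi,0\}\cup P_{r-\lambda}$ down to four, and this is where I would reuse the algebraic manipulation at the end of the proof of Theorem \ref{thm:split4_class}. The identity $r=\lambda^{2}$ is precisely what makes $\psi\in P_{r-\lambda}$, i.e.\ $\psi^{2}+\psi=r-\lambda=\gamma$, so $\psi$ coincides with one root of the $P_{\gamma}$ quadratic and the spectrum consists of the four distinct values $\rho$, $0$, and the two roots of $t^{2}+t-(r-\lambda)=0$. I would verify distinctness using Proposition \ref{prop:d3}, whose proof depends only on Vieta's formulas and the irreducibility of $A$ (guaranteed by connectedness), not on bidegreeness. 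Finally, diameter $3$ gives $\delta(G_{D})\geq 4$, so exhibiting exactly four distinct eigenvalues is both necessary and sufficient.

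The main obstacle I anticipate is bookkeeping rather than conceptual: I must confirm that every lemma invoked in Section \ref{sec:class}---in particular Lemma \ref{lem:main}, the rank argument of Proposition \ref{prop:rank}, and the distinctness claims of Proposition \ref{prop:d3}---goes through when $B$ is not column-regular. The rank computation in Proposition \ref{prop:rank} used $B^{T}B$, and without column-regularity one must be slightly careful, but the Schur-complement reduction only needs $BB^{T}$ to be $\omega$-stochastic with $\omega>0$, so Lemma \ref{lem:rank} still applies and the argument survives. Once those checks are in place, the proof is essentially a transcription of the irreducible branch of Theorem \ref{thm:split4_class}.
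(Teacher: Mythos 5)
Your proposal is correct in substance and takes exactly the route the paper does: the published proof of Theorem \ref{thm:rl} is literally the one-line remark that one ``reprises the calculations of Section \ref{sec:class},'' and you have carried that out, correctly isolating the key point that those calculations need only the identity $BB^{T}=\lambda J+(r-\lambda)I$ and the resulting $\omega$-stochasticity of $BB^{T}$ (which is what lets equation \eqref{eq:basic}, left-multiplied by $J$, confine every nonzero eigenvalue to $\{\rho,\psi\}\cup P_{r-\lambda}$ even without column-regularity of $B$).

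Two of your transferred justifications do not survive verbatim, though neither is fatal. First, when block sizes vary the partition $\{C,S\}$ is no longer equitable, so $\rho,\psi$ do not ``arise from the quotient''; if you want them as eigenvalues you should exhibit the eigenvector $\bigl(j,\mu^{-1}B^{T}j\bigr)$ directly, but in fact only the containment $Spec(A)\subseteq\{\rho,\psi,0\}\cup P_{r-\lambda}$ is needed for the count. Second, the claim that Lemma \ref{lem:rank} still applies in the Schur-complement computation is false as stated: here $B^{T}JB=dd^{T}$ with $d$ the vector of block sizes, which is not a multiple of $J$, and $B^{T}B$ is not stochastic; likewise ``$b>v$ by the same counting argument'' leans on Theorem \ref{thm:sym}, which is proved only for uniform designs. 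Both steps were serving a single purpose --- showing $0\in Spec(A)$ --- and that comes for free: once $r=\lambda^{2}$ forces $\psi=-\lambda$, hence $\psi^{2}+\psi=\lambda^{2}-\lambda=r-\lambda$ and $\psi\in P_{r-\lambda}$, the nonzero spectrum contains at most three distinct values, so the diameter-$3$ bound $\delta\geq 4$ (which you invoke at the end) simultaneously forces $0$ to be an eigenvalue and pins $\delta$ at exactly $4$. With those two detours deleted, your write-up is a complete and faithful expansion of the paper's intended argument.
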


\begin{figure}
\includegraphics[height=6cm,width=8cm]{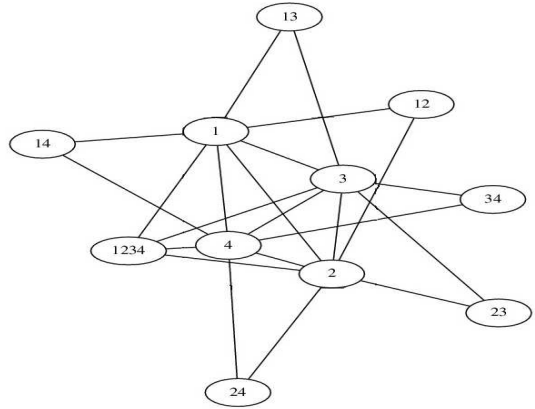}
\caption{A non-bidegreed split graph with $4$ distinct eigenvalues}\label{fig:graph}
\end{figure}

The graphs generated by Theorem \ref{thm:rl} can indeed be non-bidegreed. However, it is now incumbent upon us to give a few actual examples of such graphs. The first example is afforded by the $(4,2)$-design with the following blocks:
$$
1234, 12, 13, 14, 23, 24, 34.
$$
Taking $c=4$ and $s=7$ we obtain the graph of Figure \ref{fig:graph}. It has $11$ vertices and its spectrum is $5^{1},1^{3},0^{3},(-2)^{4}$ (the superscripts indicate multiplicities). The degree sequence is $7^{4},4^1,2^6$ and so the graph is indeed non-bidegreed. In Figure \ref{fig:graph}, the vertices of the clique are labelled $1,2,3,4$ and each vertex in the independent set is labelled with the list of points in the corresponding block.


Another class of examples can be obtained by the same approach as in Section \ref{sec:fujiwara}: if $D_{0}$ is any $(r,1)$-design, we can take $D$ to consist of $r$ copies of $D_{0}$ and then $D$ will be a $(r^{2},r)$-design. To illustrate this approach, we consider a $(6,1)$-design, denoted in \cite{LamReeVan91} as $CURD(9,6)$. Its blocks are:
$$
123\ 456\ 789\ 147\ 258\ 369
$$
$$
48\ 38\ 34\ 68\ 16\ 18
$$
$$
59\ 19\ 15\ 35\ 49\ 24
$$
$$
67\ 27\ 26\ 29\ 37\ 57
$$

Taking six copies of this design yields a split graph with $c=9$ and $s=144$. The degrees of the vertices in $S$ are all equal to either $2$ or $3$. We hope the reader will forgive us for not presenting here a drawing of the graph which has $153$ vertices. Its spectrum is $14^1,5^8,0^{135},-6^9$. 

We have not found in the literature a systematic treatment of the construction of $(r,\lambda)$-designs that satisfy $r=\lambda^{2}$. However we can at least refer the interested reader to the informal but informative discussion in \cite{MO120117} of a variety of ad-hoc methods by which such designs can be constructed from better-known designs. For constructions of $(r,1)$-designs we refer to \cite{Gro00}.


\section{Open questions}

\begin{qstn}
Let $G$ be a (not necessarily bidegreed) $3$-extremal split graph and let $t(G)$ be the number of distinct vertex degrees of $G$. Is there a constant $c \in \mathbb{N}$ such that $t(G) \leq c$? In particular, is it true that always $t(G) \leq 3$?
\end{qstn}

Similar questions are discussed in \cite[Section 4]{vanDam98}.

\begin{prob}
Characterize $d$-extremal regular chordal graphs.
\end{prob}

\section*{Acknowledgments}
We would like to thank Professor Mikhail Klin for valuable comments on the history of the subject and Professor Irene Sciriha for a careful reading of the paper. 

A part of this work was done while F.G. and S.K. were at the Hamilton Institute at the University of Maynooth, Ireland. The research of S.K. was supported in part by Science Foundation Ireland under grant number SFI/07/SK/I1216b and by the University of Manitoba under grant number 315729-352500-2000

\bibliographystyle{abbrv}
\bibliography{nuim}

\end{document}